\newtheorem{theorem}{Theorem}
\newtheorem{lemma}[theorem]{Lemma}
\newtheorem{proposition}[theorem]{Proposition}
\newtheorem{corollary}[theorem]{Corollary}
\newtheorem{question}[theorem]{Question}
\theoremstyle{definition}
\newtheorem{definition}[theorem]{Definition}
\newtheorem{example}[theorem]{Example}
\theoremstyle{remark}
\newtheorem{remark}[theorem]{Remark}
\tikzset{
	point/.style={draw=black,fill=black,opacity=1,circle,outer sep=0pt,inner sep=0,minimum size=2},
	dot/.style={draw=black,fill=black,opacity=0,circle,outer sep=0pt,inner sep=0}
}
\tikzset{
	negated/.style={
        decoration={markings,
            mark= at position 0.5 with {
                \node[transform shape] (tempnode) {$\backslash$};
            }
        },
        postaction={decorate}
    }
}
\tikzset{
	brace/.style={decoration={calligraphic brace,amplitude=5pt}, decorate, line width=1.25pt}
}
\definecolor{pallette_orange}{RGB}{230, 159, 0}
\definecolor{pallette_skyblue}{RGB}{86, 180, 233}
\definecolor{pallette_bluishgreen}{RGB}{0, 158, 115}
\definecolor{pallette_yellow}{RGB}{240, 228, 66}
\definecolor{pallette_blue}{RGB}{0, 114, 178}
\definecolor{pallette_vermillion}{RGB}{213, 94, 0}
\definecolor{pallette_reddishpurple}{RGB}{204, 121, 167}
\DeclareMathOperator{\Unrav}{\mathrm{Unrav}}
\DeclareMathOperator{\MEF}{\mathrm{MEF}}
\DeclareMathOperator{\mqrank}{\mathrm{mqr}}
\DeclareMathOperator{\Rename}{\mathrm{Rename}}
\DeclareMathOperator{\ModEm}{\mathrm{ModEm}}
\DeclareMathOperator{\Mode}{\ModEm}
\DeclareMathOperator{\Flat}{\mathrm{Coalesce}}
\DeclareMathOperator{\Skel}{\mathrm{Skel}}
\newcommand{\ZFC}{\ensuremath{\mathrm{ZFC}}}
\newcommand{\concat}{}
\DeclareMathOperator{\Rank}{\mathrm{Rank}}
\DeclareMathOperator{\Mod}{\mathrm{Mod}}
\DeclareMathOperator{\Th}{\mathrm{Th}}
\DeclareMathOperator{\Diag}{\mathrm{Diag}}
\DeclareMathOperator{\Tp}{\mathrm{Tp}}
\newcommand{\Q}{\ensuremath{\mathbb{Q}}}
\newcommand{\@usstar}[1]{{\ua}\left(#1\right)}
\newcommand{\@usnostar}[1]{{\ua}(#1)}
\newcommand{\us}{\@ifstar{\@usstar}{\@usnostar}}
\newcommand{\@dsstar}[1]{{\da}\left(#1\right)}
\newcommand{\@dsnostar}[1]{{\da}(#1)}
\newcommand{\ds}{\@ifstar{\@dsstar}{\@dsnostar}}
\newcommand{\@udsstar}[1]{{\uda}\left(#1\right)}
\newcommand{\@udsnostar}[1]{{\uda}(#1)}
\newcommand{\uds}{\@ifstar{\@udsstar}{\@udsnostar}}
\newcommand{\@Usstar}[1]{{\Ua}\left(#1\right)}
\newcommand{\@Usnostar}[1]{{\Ua}(#1)}
\newcommand{\Us}{\@ifstar{\@Usstar}{\@Usnostar}}
\newcommand{\@Dsstar}[1]{{\Da}\left(#1\right)}
\newcommand{\@Dsnostar}[1]{{\Da}(#1)}
\newcommand{\Ds}{\@ifstar{\@Dsstar}{\@Dsnostar}}
\newcommand{\@Udsstar}[1]{{\Uda}\left(#1\right)}
\newcommand{\@Udsnostar}[1]{{\Uda}(#1)}
\newcommand{\Uds}{\@ifstar{\@Udsstar}{\@Udsnostar}}
\newcommand{\ep}{\epsilon}
\newcommand{\lam}{\lambda}
\newcommand{\Lam}{\ensuremath\Lambda}
\newcommand{\vD}{\vDash}
\newcommand{\bx}{\square}
\newcommand{\ra}{\rightarrow}
\newcommand{\Ra}{\Rightarrow}
\newcommand{\Lra}{\Leftrightarrow}
\newcommand{\ua}{\uparrow}
\newcommand{\da}{\downarrow}
\newcommand{\uda}{\updownarrow}
\newcommand{\Ua}{\Uparrow}
\newcommand{\Da}{\Downarrow}
\newcommand{\Uda}{\Updownarrow}
\newcommand{\sse}{\subseteq}
\newcommand{\ol}{\overline}
\newcommand{\mc}{\mathcal}
\newcommand{\defeq}{\vcentcolon=}
\DeclarePairedDelimiter{\abs}{|}{|}
\newcommand{\bis}{\mathrel{\,\raisebox{.3ex}{$\underline{\makebox[.9em]{$\leftrightarrow$}}$}\,}}
\renewcommand{\leq}{\leqslant}
\renewcommand{\geq}{\geqslant}
\renewcommand{\preceq}{\preccurlyeq}
\renewcommand{\propto}{\varpropto}
\DeclareFontFamily{U} {MnSymbolC}{}
\DeclareFontShape{U}{MnSymbolC}{m}{n}{
	<-6> MnSymbolC5
	<6-7> MnSymbolC6
	<7-8> MnSymbolC7
	<8-9> MnSymbolC8
	<9-10> MnSymbolC9
	<10-12> MnSymbolC10
	<12-> MnSymbolC12}{}
\DeclareFontShape{U}{MnSymbolC}{b}{n}{
	<-6> MnSymbolC-Bold5
	<6-7> MnSymbolC-Bold6
	<7-8> MnSymbolC-Bold7
	<8-9> MnSymbolC-Bold8
	<9-10> MnSymbolC-Bold9
	<10-12> MnSymbolC-Bold10
	<12-> MnSymbolC-Bold12}{}
\DeclareSymbolFont{MnSyC} {U} {MnSymbolC}{m}{n}
\DeclareMathSymbol{\meddiamond}{\mathbin}{MnSyC}{110}
\newcommand{\raisedia}[2]{\raisebox{0.8\depth}{$#1\meddiamond$}}
\newcommand{\diaraw}{\mathpalette\raisedia\relax}
\newcommand{\dia}{{\diaraw}}
\title{Bisimulations of potentialist systems}
\author[*]{Sam Adam-Day}
\affil[*]{Mathematical Institute, University of Oxford, Andrew Wiles Building, Radcliffe Observatory Quarter, Woodstock Road, Oxford, OX2 6GG, United Kingdom; \href{mailto:adamday@maths.ox.ac.uk}{\nolinkurl{adamday@maths.ox.ac.uk}}}
\date{}
\begin{document}

	\maketitle

	\abstract{
		A potentialist system is a first-order Kripke model based on embeddings. I define the notion of bisimulation for these systems, and provide a number of examples. Given a first-order theory $T$, the system $\Mod(T)$ consists of all models of $T$. We can then take either all embeddings, or all substructure inclusions, between these models. I show that these two ways of defining $\Mod(T)$ are bitotally bisimilar. Next, I relate the notion of bisimulation to a generalisation of the Ehrenfeucht-Fraïsé game, and use this to show the equivalence of the existence of a bisimulation with elementary equivalence with respect to an infinitary language. Finally, I consider the question of when a potentialist system is bitotally bisimilar to a system containing set-many models, providing too different sufficient conditions.
	}

	\renewcommand{\thefootnote}{}
	\footnote{\emph{Keywords}: potentialist system, bisimulation, modal model theory, first-order Kripke model, Ehrenfeucht-Fraïssé game}
	\footnote{\emph{2020 Mathematics Subject Classification}: 03B45, 03C07, 03C75}
	\renewcommand{\thefootnote}{\arabic{footnote}}
	\addtocounter{footnote}{-2}


\section{Introduction}
\label{sec:intro}

The two main objects of study in this article are potentialist systems and bisimulations. A potentialist system is a collection of first-order structures and embeddings between them in which one can interpret first-order modal logic (for definitions, see \cref{sec:potentialist systems}). Potentialist systems are closely related to first-order Kripke models. The former differ from the latter in the following three ways.
\begin{enumerate*}[label=(\arabic*)]
    \item All instances of the accessibility relation between structures are required to be embeddings; in other words, the accessibility relation is required to be inflationary on the domains.
    \item We allow multiple embeddings between two structures.
    \item We allow class-many structures.
\end{enumerate*}

Potentialist systems provide a way of investigating a mathematical structure within the context of other related structures, making precise the notions of necessity and possibility. A formal treatment of these systems is given in \cite{HamkinsLinnebo:Modal-logic-of-set-theoretic-potentialism}. The study of potentialist systems is called `modal model theory'. This name was introduced in \cite{hamkins2020modal}, which conducts a thorough study of the system $\Mod(T)$ of models of a first-order theory $T$. Potentialist systems of set-theoretic forcing extensions are studied in \cite{hamkins_2003, HamkinsLoewe2008:TheModalLogicOfForcing,HamkinsLoewe2013:MovingUpAndDownInTheGenericMultiverse,HamkinsLinnebo:Modal-logic-of-set-theoretic-potentialism,hamkins2015a}. Further systems of models of arithmetic and set theory are studied in \cite{hamkins2018universal,hamkins2018modal,hamkins_williams_2021}. The relationship between model theory and modal logic is also explored in \cite{saveliev-shapirovsky-2018}.

A bisimulation between two potentialist systems is a system of finite partial isomorphisms between structures in one and structures in the other, such that any increase in the size of the isomorphism or travel along the accessibility relation on one side can be mirrored on the other. A bisimulation is bitotal if every element of every model in both systems takes part in the relation. This notion of bisimulation simultaneously generalises the notion of a bisimulation between propositional Kripke models and the notion of a back-and-forth system for first-order models. One of the key properties which bisimulations satisfy is invariance: if two systems are bisimilar, they satisfy the same modal sentences, even allowing for infinite disjunctions and conjunctions. 

An important class of examples of potentialist systems are those of the form $\Mod(T)$, consisting of every model of some first-order theory $T$. There are two ways of defining $\Mod(T)$: by including all embeddings between models, or just the substructure inclusions. As discussed in \cite{hamkins2020modal}, the latter aligns well with a potentialist philosophy on the theory $T$, in which we view the `universe' as unfolding in stages, with individuals becoming actual and persisting identically through subsequent stages. The former on the other hand typically enjoys better algebraic properties, like directedness, amalgamation or convergence (see \cite[Section~5]{hamkins2020modal} for definitions). In Theorem~29 of that paper, Hamkins and Wołoszyn show that the two resulting systems satisfy the same modal sentences. I go one step further and show that the two systems are in fact bitotally bisimilar in a particularly strong way (\cref{thm:Mod T Mode T bis}). This provides a deeper explanation for why the two are equivalent on modal sentences, and immediately extends the result to equivalence with respect to an infinitary language. Moreover, the bisimulation should allow us to transfer results from the more mathematically amenable system based on embeddings, to the more philosophically relevant one based on inclusions.

Bisimulations have a natural interpretation in terms of infinite games. I elaborate on this connection, defining the notion of a \emph{modal Ehrenfeucht-Fraïssé game}. This furnishes us with the converse of invariance: if two systems agree on all infinitary modal sentences then they are bisimilar.

For the remainder of the article, I consider under what circumstances a class-sized potentialist system is bitotally bisimilar with a set-sized one. First, I restrict attention to the systems $\Mod(T)$ for some first-order theory $T$. I show that if $T$ has infinite models, and all its models of size $\kappa$ are $\aleph_0$-saturated, then $\Mod(T)$ is bitotally bisimilar to a system consisting of set-many models. In the last part I take a different tack, showing that if a potentialist system consists of models of an $\aleph_0$-categorical theory which admits quantifier elimination then it is bitotally bisimilar to a particularly simple system containing set-many models. This result makes no assumption on the modal structure of the system, and as such it might be expected that it wouldn't admit much strengthening. As evidence for this, I provide a partial converse, and give an example where weakening the assumption `admits quantifier elimination' leads to a system which is not bisimilar with a set-sized one.

\section{Potentialist systems}
\label{sec:potentialist systems}

A propositional Kripke model consists of a set of `worlds', a relation on this set, and an assignment to each world of the truth values of each propositional variable \cite{BlackburnPatrick2001Ml,ChagrovAlexander1997Ml}. These models are used to provide a semantics for propositional modal logic. They generalise to first-order Kripke models, in which each world is now an entire first-order structure, and we specify how the elements of connected worlds are related. See \cite{HughesG.E1996Anit,FittingMelvin1998Fml,BenthemJohanvan2010Mlfo} for more on first-order Kripke models.

Potentialist systems specialise first-order Kripke models, in that we require that all relations between worlds be embeddings. However, they also generalise, in that we allow multiple relations between any two worlds, and that the collection of worlds be class-sized. The terminology `potentialist system' was first used in \cite{HamkinsLinnebo:Modal-logic-of-set-theoretic-potentialism}.

\begin{definition}\label{def:potentialist system}
	A \emph{potentialist system} is a collection of first-order structures together with a collection of embeddings between them, containing the identity embeddings and all compositions of embeddings.
\end{definition}

I will refer to the elements of a potentialist system as `worlds', `structures' or `models' (the latter being reserved for the context in which we are considering models of a certain first-order theory). Note that a potentialist system is exactly a category of first-order structures with embeddings. A potentialist system is an instance of the more general notion of a `Kripke category', in which one considers the modal logic of an arbitrary concrete category. Kripke categories are defined and investigated in upcoming work by Wojciech Aleksander Wołoszyn \cite{Wojciech-Kripke-cats}. 

Notice that we allow multiple embeddings between worlds. Sometimes, it is natural to consider potentialist systems in which embeddings are always unique, so the abstracted relationship between the worlds is a partial order.

\begin{definition}
	A potentialist system is \emph{thin} if from any first-order structures in it to any other there is at most one embedding.
\end{definition}

This corresponds with the notion of `thinness' for categories. Later, I will show that restricting to thin potentialist systems does not lose generality, in the sense that every potentialist system is bisimilar with a thin one.

\begin{definition}
	A \emph{pointed system} is a triple $(\mc M, M, \ol a)$, where $\mc M$ is a potentialist system, $M \in \mc M$ and $\ol a$ is a finite tuple of elements of $M$. The \emph{parameter size} of the pointed system is the size of $\ol a$. When the parameter size is $0$, write $(\mc M, M)$.
\end{definition}

Let us now move on to consider the interpretation of formulas in potentialist systems. We are interested in several languages. We start with some first-order, finitary, non-modal language $L$ (specified by its signature of non-logical symbols). The first language to meet is the standard first-order modal language based on $L$.

\begin{definition}
	The language $L^\dia$ is formed by adding $\dia$ and $\bx$ as operators. In other words, $L^\dia$ is closed under boolean combinations, quantification and the follow two new rules: whenever $\phi$ is a formula, so are $\dia\phi$ and $\bx\phi$.
\end{definition}

Now, when we come to consider the relationship between bisimulations and sentence satisfaction, we will need to talk about infinitary languages. The definition generalises that of infinitary first-order (non-modal) languages \cite{HodgesWilfrid1993Mt}.

\begin{definition}
	For $\kappa$ an infinite cardinal, let $L_\kappa$ be the first-order language in which we allow conjunction and disjunction of size less than $\kappa$, with the proviso that all formulas contain only finitely many free variables. Let $L_\infty$ be the (class-sized) union of each $L_\kappa$. The languages $L^\dia_\kappa$ and $L^\dia_\infty$ are defined similarly, this time including $\dia$ and $\bx$ as modal operators.
\end{definition}

\begin{remark}
	In non-modal first-order infinitary languages, one can also consider allowing infinitely long quantifier blocks. With this in mind the language $L_\kappa$ is elsewhere commonly denoted by $L_{\kappa, \omega}$, where the `$\omega$' subscript makes explicit the fact that quantifier blocks are all finite; likewise $L_\infty$ is written $L_{\infty, \omega}$. Similarly, in the modal case, one might consider allowing infinitely long blocks of modal operators. So, strictly speaking, we could write $L^\dia_{\kappa,\omega,\omega}$ for $L^\dia_{\kappa}$ and $L^\dia_{\infty,\omega,\omega}$ for $L^\dia_{\infty}$. However, since infinitely long quantifier or modal operator blocks are not considered here, I suppress the additional subscripts.
\end{remark}

Finally, since formula trees are well-founded, one can assign a rank to each formula in $L^\dia_\infty$. This provides a useful complexity stratification, which will be relevant when we come to consider modal Ehrenfeucht-Fraïssé games.


\begin{definition}
	The \emph{modal-quantifier rank}, $\mqrank(\phi)$, of a formula $\phi \in L^\dia_\infty$ is an ordinal defined recursively on the construction of $\phi$.
	\begin{align*}
		\mqrank(\psi)&=0 \tag{when $\psi$ is atomic} \\
		\mqrank(\neg \phi) &= \mqrank(\phi) \\
		\mqrank\left(\bigvee_{i \in I} \phi_i\right) = \mqrank\left(\bigwedge_{i \in I} \phi_i\right) &= \sup_{i \in I} \mqrank(\phi_i) \\
		\mqrank(\exists x \phi) = \mqrank(\forall x \phi) &= \mqrank(\phi)+1 \\
		\mqrank(\dia \phi) = \mqrank(\bx \phi) &= \mqrank(\phi)+1
	\end{align*}
	Write $L^{\dia,\alpha}_\infty$ for the class of $L^\dia_\infty$-formulas of modal-quantifier rank less than $\alpha$.
\end{definition}

The interpretation of first-order modal formulas extends that of classical formulas by interpreting $\dia \phi$ as `$\phi$ holds in some extension' and $\bx \phi$ as the dual: `$\phi$ holds in every extension'.

\begin{definition}
	Let $(\mc M, M, \ol a)$ be a pointed system, and $\phi$ be an $L^\dia_\infty$-formula. We define what it means for $\phi$ to be true at this pointed system, which we write as $\mc M, M \vD \phi[\ol a]$, by induction on the complexity of $\phi$. The non-modal cases are as usual. We let $\mc M, M \vD \dia \phi[\ol a]$ if and only if there is $\pi \colon M \to M'$ in $\mc M$ such that $\mc M, M' \vD \phi[\pi(\ol a)]$. We let $\mc M, M \vD \bx \phi[\ol a]$ if and only if for every $\pi \colon M \to M'$ in $\mc M$ we have $\mc M, M' \vD \phi[\pi(\ol a)]$.
\end{definition}

Note that $\bx \phi$ is equivalent to $\neg \dia \neg \phi$, so in our inductions on formulas we won't need to treat the $\bx \phi$ case separately.

\begin{remark}
	There are some definability issues here, stemming from the fact that the collection of worlds in $\mc M$ may be a proper class. For instance, if we take $\mc M$ to be the class of all initial segments $V_\alpha$ of the cumulative hierarchy under subset inclusion, then $L^\dia$ in $\mc M$ can capture first-order truth in the ambient universe (see \cite[Theorem~1]{HamkinsLinnebo:Modal-logic-of-set-theoretic-potentialism} and \cite{linnebo_2013}). This issue is discussed in Section~9 of \cite{hamkins2020modal}. Following the discussion there, to make sense of the previous definition we can view it as taking place within Morse-Kelley set theory, or Von Neumann-Bernays-Gödel set theory with the axiom of elementary transfinite recursion. Alternatively, we can choose to work with a sequence of Grothendieck-Zermelo universes, so that the extension of some potentialist system in one universe is a set in the next universe up. Note that \ZFC-definability issues don't arise for all class-sized systems. Indeed, Sections~\ref{sec:mod T set} and \ref{sec:omega categorical} provide examples of class-sized systems which are strongly equivalent to set-sized ones.
\end{remark}

\begin{definition}
	The relations $\equiv$, $\equiv^\dia$, $\equiv^\dia_\infty$ and $\equiv^{\dia,\alpha}_\infty$ between pointed systems are the elementary equivalence relations with respect to $L$, $L^\dia$, $L^\dia_\infty$ and $L^{\dia,\alpha}_\infty$ respectively, taking the tuples in the pointed systems as parameters. For example, $(\mc M, M, \ol a) \equiv^\dia_\infty (\mc N, N, \ol b)$ means that for every $L^\dia_\infty$-formula $\phi$, we have $\mc M, M \vD \phi[\ol a]$ if and only if $\mc N, N \vD \phi[\ol b]$.
\end{definition}

Let us meet an important class of potentialist systems, first defined in \cite{hamkins2020modal}. Let $T$ be a first-order $L$-theory. The potentialist system $\Mod(T)$ will be the system based on the collection of all models of $T$. There are two ways of defining this, depending on whether we take the relations between structures to be inclusions or embeddings.

\begin{definition}
	The \emph{substructure potentialist system}, $\Mod(T)$, based on $T$, consists of the class of models of $T$ together with all substructure inclusions. That is, we include all and only the substructure inclusion maps $\iota \colon M \sse N$.
\end{definition}

Of course, $\Mod(T)$ is a thin potentialist system. Note also that the structure of $\Mod(T)$ depends on the underlying domains of the models, in the sense that $M$ may be a substructure of $N$ but not of an isomorphic copy of $N$. It is natural then to consider allowing all embeddings between models, so that the relations between models depend only on their isomorphism types.

\begin{definition}
	The \emph{embedding potentialist system}, $\Mode(T)$, based on $T$ consists of the class of models of $T$ together with all embeddings. In other words, from $M$ to $N$ we include all maps $M \to N$ which are embeddings of $L$-structures.
\end{definition}

In \cref{thm:Mod T Mode T bis} below, I will show that $\Mod(T)$ and $\Mode(T)$ are bitotally bisimilar in a strong sense. As noted in the introduction, this provides and important and deep connection between the philosophically interesting $\Mod(T)$ on the one hand, and the mathematically tractable $\Mode(T)$ on the other.

\section{Bisimulations}
\label{sec:bisimulations}

The notion of bisimulation is central in propositional modal logic (see for example \cite[\S2.2]{BlackburnPatrick2001Ml}). It can be extended to first-order modal logic, at the same time generalising the notion of a back-and-forth system for first-order structures. The following is adapted from Definition 11.4.2 in \cite[p.~123]{BenthemJohanvan2010Mlfo}.

\begin{definition}
	A \emph{bisimulation} between potentialist systems $\mc M$ and $\mc N$ in the same language $L$ is a non-empty relation $\sim$ matching pairs $(M,\ol a)$ with pairs $(N, \ol b)$, where $(\mc M, M,\ol a)$ and $(\mc N, N, \ol b)$ are pointed systems of the same parameter size, such that when $(M,\ol a) \sim (N, \ol b)$ the following hold.
	\begin{enumerate}[label=(B\arabic*), leftmargin=*, labelindent=1pt]
		\item \label{item:partial isomorphism; def:bisimulation} 
			The map $\ol a \mapsto \ol b$ is a partial isomorphism between $M$ and $N$.
		\item \label{item:isomorphism extension; def:bisimulation} 
			For any $c \in M$ there is $d \in N$ such that $(M, \ol a \concat c) \sim (N, \ol b \concat d)$, and vice versa (where $\ol a \concat c$ is the result of adding $c$ at then end of the tuple $\ol a$, etc.).
		\item \label{item:back and forth; def:bisimulation} 
			For any $\pi \colon M \to M'$ in $\mc M$ there is $\rho \colon N \to N'$ in $\mc N$ such that: 
		\begin{equation*}
			(M', \pi(\ol a)) \sim (N', \rho(\ol b))
		\end{equation*}
		and vice versa.
	\end{enumerate}
	Write $(\mc M, M, \ol a) \bis (\mc N, N, \ol b)$ when there is a bisimulation $\sim$ between $\mc M$ and $\mc N$ such that:
	\begin{equation*}
		(M, \ol a) \sim (N, \ol b)
	\end{equation*}
\end{definition}

\begin{definition}
	A bisimulation $\sim$ is \emph{left total} if every pair $(M, \ol a)$ on the left is related to some pair $(N, \ol b)$ on the right. It is \emph{right total} if every pair $(N, \ol b)$ on the right is related to some pair $(M, \ol a)$ on the left. It is \emph{bitotal} if it is both left and right total.
\end{definition}

A bisimulation is a kind of back-and-forth system corresponding to a modal version of the Ehrenfeucht-Fraïssé game. This game-theoretic side will be examined in more detail in \cref{sec:games}. There, it will also be shown that bisimulations correspond to $L^\dia_\infty$-elementary equivalence. We can get a more fine-grained notion corresponding to $L^{\dia,\alpha+1}_\infty$-elementary equivalence as follows.

\begin{definition}\label{def:alpha bisimulation}
	Let $\alpha$ be an ordinal. An \emph{$\alpha$-bisimulation} between potentialist systems $\mc M$ and $\mc N$ in the same language $L$ is a collection of non-empty relations $\sim_\beta$ for $\beta \leq \alpha$. Each relates pairs $(M,\ol a)$ with pairs $(N, \ol b)$, where $(\mc M, M,\ol a)$ and $(\mc N, N, \ol b)$ are pointed systems of the same parameter size. Whenever $(M,\ol a) \sim_\beta (N, \ol b)$ we require that the following hold.
	\begin{enumerate}[label=($\alpha$B\arabic*), leftmargin=*, labelindent=3pt]
		\item \label{item:partial isomorphism; def:alpha-bisimulation} 
			The map $\ol a \mapsto \ol b$ is a partial isomorphism between $M$ and $N$.
		\item \label{item:isomorphism extension; def:alpha-bisimulation} 
			For every $\gamma < \beta$: for any $c \in M$ there is $d \in N$ such that we have $( M, \ol a \concat c) \sim_\gamma (N, \ol b \concat d)$, and vice versa.
		\item \label{item:back and forth; def:alpha-bisimulation} 
			For every $\gamma < \beta$: for any $\pi \colon M \to M'$ in $\mc M$ there is $\rho \colon N \to N'$ in $\mc N$ such that $(M', \pi(\ol a)) \sim_\gamma (N', \rho(\ol b))$, and vice versa.
	\end{enumerate}
	Write $(\mc M, M,\ol a) \bis_\alpha (\mc N, N, \ol b)$ to express the existence of an $\alpha$-bisimulation between $\mc M$ and $\mc N$ such that $(M,\ol a) \sim_\alpha (N, \ol b)$.
\end{definition}

\begin{remark}
	The reader might be wondering why an $\alpha$-bisimulation is defined as a collection of relations $\sim_\beta$ for $\beta \leq \alpha$ rather than for $\beta < \alpha$. It would appear that we lack a convenient way of expressing that `$(\mc M, M,\ol a)$ and $(\mc N, N, \ol b)$ are $\alpha$-bisimilar for every $\alpha$ less than some limit ordinal $\delta$'. However, it is not hard to see that this notion just described is equivalent to $(\mc M, M,\ol a)$ and $(\mc N, N, \ol b)$ being $\delta$-bisimilar. Moreover, it is desirable that being $0$-bisimilar should coincide with being equivalent on atomic formulas.
\end{remark}

\begin{remark}
	In \cref{def:alpha bisimulation} there is no requirement made that the collection of relations $\sim_\beta$ cohere. But note that without loss of generality we can always assume that $\sim_\beta \sse \sim_\gamma$ for $\gamma < \beta$.
\end{remark}

Now, one way of thinking about a bisimulation is as a collection of partial isomorphisms of finite parts of first-order structures which can extended indefinitely. A special case of this is when these partial isomorphisms uniquely combine to a system of total isomorphisms between worlds in one potentialist system and worlds in the other.

\begin{definition}
	An \emph{iso-bisimulation} between potentialist systems $\mc M$ and $\mc N$ is a non-empty relation $\approx$ between $\mc M$ and $\mc N$ together with a system of isomorphisms $(\xi_{M,N} \colon M \to N \mid M \approx N)$ subject to the following condition. Whenever $\pi \colon M \to M'$ in $\mc M$ and $M \approx N$, there is $\rho \colon N \to N'$ in $\mc N$ such that $M' \approx N'$ and the following diagram commutes; and vice versa.
	\begin{equation*}
		\begin{tikzcd}
			M' \arrow[r, "{\xi_{M',N'}}"]                              & N'                                 \\
			M \arrow[u, "{\pi}"] \arrow[r, "{\xi_{M,N}}"] & N \arrow[u, "{\rho}"']
		\end{tikzcd}
	\end{equation*}
\end{definition}

The following allows us to convert any iso-bisimulation into an (ordinary) bisimulation.

\begin{lemma}
	Let $(\approx,(\xi))$ be an iso-bisimulation between $\mc M$ and $\mc N$. Define the relation $\approx^*$ by setting $(M, \ol a) \approx^* (N, \ol b)$ if and only if $M \approx N$ and $\xi_{M,N}(\ol a) = \ol b$. Then $\approx^*$ is a bisimulation.
\end{lemma}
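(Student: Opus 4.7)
The plan is to verify, directly from the definitions, that $\approx^*$ satisfies non-emptiness together with the three conditions \ref{item:partial isomorphism; def:bisimulation}, \ref{item:isomorphism extension; def:bisimulation} and \ref{item:back and forth; def:bisimulation} of a bisimulation. Non-emptiness is immediate: since $\approx$ is non-empty, pick some $M \approx N$ and observe that $(M, \emptyset) \approx^* (N, \emptyset)$ using the empty tuple, as $\xi_{M,N}$ vacuously sends it to itself.

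For \ref{item:partial isomorphism; def:bisimulation}, suppose $(M, \ol a) \approx^* (N, \ol b)$, so $M \approx N$ and $\xi_{M,N}(\ol a) = \ol b$. Since $\xi_{M,N}\colon M \to N$ is an isomorphism, the restriction of $\xi_{M,N}$ to $\ol a$ is in particular a partial isomorphism, and it is exactly the map $\ol a \mapsto \ol b$. For \ref{item:isomorphism extension; def:bisimulation}, given $c \in M$, set $d \defeq \xi_{M,N}(c) \in N$; then $\xi_{M,N}(\ol a \concat c) = \ol b \concat d$, so $(M, \ol a \concat c) \approx^* (N, \ol b \concat d)$. The reverse direction uses $\inv{\xi_{M,N}}$ in the same way.

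The only step with any content is \ref{item:back and forth; def:bisimulation}, and it is where the commutative square in the definition of iso-bisimulation is used. Given $\pi \colon M \to M'$ in $\mc M$ and $(M, \ol a) \approx^* (N, \ol b)$, apply the iso-bisimulation condition to obtain $\rho \colon N \to N'$ in $\mc N$ with $M' \approx N'$ and $\xi_{M', N'} \circ \pi = \rho \circ \xi_{M, N}$. Then
\begin{equation*}
    \xi_{M', N'}(\pi(\ol a)) = \rho(\xi_{M, N}(\ol a)) = \rho(\ol b),
\end{equation*}
which yields $(M', \pi(\ol a)) \approx^* (N', \rho(\ol b))$ as required; the symmetric case is dual. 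I don't expect any genuine obstacle here — this is essentially a bookkeeping lemma confirming that the chosen component isomorphisms $\xi_{M,N}$ package the data of $\approx$ into a back-and-forth system in the appropriate sense.
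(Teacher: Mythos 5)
Your proof is correct, and it is exactly the routine verification the paper intends: the lemma is stated there without proof, being regarded as an immediate consequence of the definitions. Your treatment of the back-and-forth condition via the commutative square $\xi_{M',N'}\circ\pi=\rho\circ\xi_{M,N}$ is precisely the point the omitted argument would rest on, so nothing is missing.
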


Hence, any iso-bisimulation gives rise to a bisimulation. The converse is not true in general, e.g.\@ for cardinality reasons, or because a bisimulation can involve multiple non-compatible partial isomorphisms between structures. The following is a simple example of two bisimilar systems which are not iso-bisimilar.

\begin{example}
	Let $L$ be the empty language. Let $\mc M$ be the potentialist system consisting of a single $\aleph_0$-size $L$-structure, and let $\mc N$ be the structure consisting of an $\aleph_0$-sized and an $\aleph_1$-sized $L$-structure, with an embedding from the former to the latter. There is no iso-bisimulation $\mc M \bis \mc N$ for cardinality reasons, but there is an (ordinary) bisimulation $\mc M \bis \mc N$. Indeed, relate pairs according to their atomic types:\footnote{I use the notation $\Delta_0^M(\ol a)$ to denote the atomic type of $\ol a$ in $M$: the set of atomic formulas true in $M$ of $\ol a$.}
	\begin{equation*}
		(M, \ol a) \sim (N, \ol b) \quad\Lra\quad \Delta_0^M(\ol a) = \Delta_0^N(\ol b)
	\end{equation*}
\end{example}

A key property of bisimulations is their invariance: bisimilar systems satisfy the same formulas (c.f.\@ \cite[Theorem~21, p.~124]{BenthemJohanvan2010Mlfo}).

\begin{theorem}[Fundamental Theorem of Bisimulations]\label{thm:bisimulation invariance}
	If two pointed systems are bisimilar then they are $L^\dia_\infty$-elementarily equivalent.
\end{theorem}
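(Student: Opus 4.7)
The plan is a routine induction on the construction of the formula $\phi \in L^\dia_\infty$, which is legitimate since the formation trees of $L^\dia_\infty$-formulas are well-founded. Fix a bisimulation $\sim$ between $\mc M$ and $\mc N$ witnessing $(M, \ol a) \sim (N, \ol b)$. I will prove by induction on $\phi$ that whenever $(M', \ol c) \sim (N', \ol d)$ we have $\mc M, M' \vD \phi[\ol c]$ if and only if $\mc N, N' \vD \phi[\ol d]$; applying this at the witnessing pair gives the theorem.

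For the base case, atomic formulas, the equivalence is immediate from \ref{item:partial isomorphism; def:bisimulation}: since $\ol c \mapsto \ol d$ is a partial isomorphism, every atomic $L$-formula holds of $\ol c$ in $M'$ exactly when it holds of $\ol d$ in $N'$. Negation is trivial by the inductive hypothesis, and infinitary conjunction and disjunction are handled pointwise: if $\phi = \bigvee_{i \in I} \phi_i$, then $\mc M, M' \vD \phi[\ol c]$ iff some $\phi_i$ holds of $\ol c$ at $M'$, iff by IH some $\phi_i$ holds of $\ol d$ at $N'$, iff $\mc N, N' \vD \phi[\ol d]$, and similarly for conjunctions.

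The two interesting inductive steps use the two back-and-forth conditions. For $\phi = \exists x \,\psi$: if $\mc M, M' \vD \phi[\ol c]$, pick $e \in M'$ with $\mc M, M' \vD \psi[\ol c \concat e]$. By \ref{item:isomorphism extension; def:bisimulation} there is $f \in N'$ with $(M', \ol c \concat e) \sim (N', \ol d \concat f)$, and the IH yields $\mc N, N' \vD \psi[\ol d \concat f]$, hence $\mc N, N' \vD \phi[\ol d]$. The converse uses the `vice versa' half of \ref{item:isomorphism extension; def:bisimulation}. For $\phi = \dia\psi$: if $\mc M, M' \vD \phi[\ol c]$, choose $\pi \colon M' \to M''$ in $\mc M$ with $\mc M, M'' \vD \psi[\pi(\ol c)]$. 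By \ref{item:back and forth; def:bisimulation} there is $\rho \colon N' \to N''$ in $\mc N$ with $(M'', \pi(\ol c)) \sim (N'', \rho(\ol d))$; the IH gives $\mc N, N'' \vD \psi[\rho(\ol d)]$, hence $\mc N, N' \vD \phi[\ol d]$. The converse is symmetric, and since $\bx\psi$ is equivalent to $\neg\dia\neg\psi$ no separate case is required.

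There is no serious obstacle here: the three bisimulation clauses are tailored exactly to the three non-boolean constructors of $L^\dia_\infty$ (atomic, first-order quantifier, modal operator), and the infinitary boolean cases go through without any cardinality assumption because the induction is on the formula's formation tree rather than on an ordinal rank.
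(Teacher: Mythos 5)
Your proof is correct and follows essentially the same route as the paper's: an induction on the formation tree of $\phi$, with the atomic, existential, and $\dia$ cases handled by \ref{item:partial isomorphism; def:bisimulation}, \ref{item:isomorphism extension; def:bisimulation}, and \ref{item:back and forth; def:bisimulation} respectively, and $\bx$ reduced to $\neg\dia\neg$. The only difference is that you make explicit the strengthened inductive statement quantifying over all $\sim$-related pairs, which the paper leaves implicit.
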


\begin{proof}
	The proof is by induction on formulas. Let $\sim$ be a bisimulation between $(\mc M, M, \ol a)$ and $(\mc N, N, \ol b)$. 
	The atomic case follows from \ref{item:partial isomorphism; def:bisimulation}. 
	If we have $\mc M, M \vD \exists x \psi([\ol a], x)$ then there is $c \in M$ such that $\mc M, M \vD \psi[\ol a,c]$; by \ref{item:isomorphism extension; def:bisimulation} there is $d \in N$ such that $(M, \ol a \concat c) \sim (N, \ol b \concat d)$, so by induction hypothesis $\mc N, N \vD \psi[\ol b, d]$ and $\mc N, N \vD \exists x \psi([\ol b], x)$. 
	If $\mc M, M \vD \dia \psi[\ol a]$ then for some $\pi \colon M \to M'$ in $\mc M$ we have $\mc M, M' \vD \psi[\pi(\ol a)]$. By \ref{item:back and forth; def:bisimulation} there is $\rho \colon N \to N'$ in $\mc N$ with $(M', \pi(\ol a)) \sim (N',\rho(\ol b))$; so by induction hypothesis $\mc N, N' \vD \psi[\rho(\ol b)]$ and $\mc N, N \vD \dia \psi[\ol b]$.
\end{proof}

\begin{corollary}\label{cor:iso-bisimulation invariance}
	If $(\approx,(\xi))$ is an iso-bisimulation between $\mc M$ and $\mc N$, then whenever $M \approx N$, for any $\ol a$ in $M$ and formula $\phi$ of $L^\dia_\infty$:
	\begin{equation*}
		\mc M, M \vD \phi[\ol a] \Lra \mc N, N \vD \phi[\xi_{M,N}(\ol a)]
	\end{equation*}
\end{corollary}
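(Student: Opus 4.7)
The plan is to chain together the two preceding results: the lemma which converts an iso-bisimulation into an ordinary bisimulation, and the Fundamental Theorem of Bisimulations. No new induction or construction is needed.

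First I would observe that, by the definition of $\approx^*$ in the preceding lemma, $(M, \ol a) \approx^* (N, \xi_{M,N}(\ol a))$ whenever $M \approx N$, simply because $\xi_{M,N}(\ol a) = \xi_{M,N}(\ol a)$. Since that lemma tells us $\approx^*$ is a bisimulation between $\mc M$ and $\mc N$, this witnesses that
\begin{equation*}
    (\mc M, M, \ol a) \bis (\mc N, N, \xi_{M,N}(\ol a)).
\end{equation*}

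Then I would invoke \cref{thm:bisimulation invariance} directly: bisimilar pointed systems are $L^\dia_\infty$-elementarily equivalent, which is precisely the desired biconditional $\mc M, M \vD \phi[\ol a] \Lra \mc N, N \vD \phi[\xi_{M,N}(\ol a)]$ for every $\phi \in L^\dia_\infty$.

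There is no real obstacle here; the proof is essentially one line combining the two previous results. The only thing worth double-checking is that the parameter tuples on the two sides genuinely have the same length and that $\xi_{M,N}$ being an isomorphism ensures the partial isomorphism condition \ref{item:partial isomorphism; def:bisimulation} holds trivially on arbitrary tuples, but both points are immediate from the definition of an iso-bisimulation.
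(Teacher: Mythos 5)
Your proposal is correct and matches the argument the paper intends: the corollary is stated without proof precisely because it follows by combining the preceding lemma (which turns $(\approx,(\xi))$ into the bisimulation $\approx^*$ with $(M,\ol a) \approx^* (N,\xi_{M,N}(\ol a))$) with the Fundamental Theorem of Bisimulations. Nothing further is needed.
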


\begin{theorem}[Fundamental Theorem of $\alpha$-bisimulations]\label{thm:alpha-bisimulation invariance}
	If two pointed systems are $\alpha$-bisimilar then they are $L^{\dia, \alpha+1}_\infty$-elementarily equivalent.
\end{theorem}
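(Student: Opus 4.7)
The plan is to mirror the proof of \cref{thm:bisimulation invariance}, but parametrised by the ordinal index of the bisimulation, tracking how modal-quantifier rank decreases at each inductive step. Concretely, given an $\alpha$-bisimulation $(\sim_\beta)_{\beta \leq \alpha}$, I would prove by induction on $\phi \in L^\dia_\infty$ the following strengthened statement: for every $\beta \leq \alpha$ with $\mqrank(\phi) \leq \beta$, and every pair with $(M, \ol a) \sim_\beta (N, \ol b)$, we have $\mc M, M \vD \phi[\ol a] \Lra \mc N, N \vD \phi[\ol b]$. Specialising to $\beta = \alpha$ and $\phi \in L^{\dia,\alpha+1}_\infty$ (that is, $\mqrank(\phi) \leq \alpha$) yields the theorem.

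The atomic case is immediate from \ref{item:partial isomorphism; def:alpha-bisimulation}. The negation case is immediate, as is the infinitary Boolean case, since $\mqrank\left(\bigvee_i \phi_i\right) = \sup_i \mqrank(\phi_i)$, so each conjunct/disjunct satisfies the same rank bound as the whole. For the quantifier case $\exists x \psi$: suppose $\mqrank(\exists x \psi) = \mqrank(\psi)+1 \leq \beta$, and $\mc M, M \vD \exists x \psi[\ol a]$, witnessed by some $c \in M$. Since $\mqrank(\psi) < \beta$, I can apply \ref{item:isomorphism extension; def:alpha-bisimulation} with $\gamma \defeq \mqrank(\psi)$ to obtain $d \in N$ with $(M, \ol a \concat c) \sim_\gamma (N, \ol b \concat d)$; the induction hypothesis on $\psi$ (applied at index $\gamma \geq \mqrank(\psi)$) then gives $\mc N, N \vD \psi[\ol b, d]$, hence $\mc N, N \vD \exists x \psi[\ol b]$. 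The modal case $\dia \psi$ is treated identically using \ref{item:back and forth; def:alpha-bisimulation} in place of \ref{item:isomorphism extension; def:alpha-bisimulation}, with $\gamma = \mqrank(\psi) < \beta$.

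There is essentially no obstacle here beyond bookkeeping: the argument is a routine transcription of the proof of \cref{thm:bisimulation invariance}, and the only subtlety is verifying that at each quantifier or modal step one has $\mqrank(\psi) < \beta$ available as a legitimate choice of $\gamma$, which is precisely what the hypothesis $\mqrank(\phi) \leq \beta$ guarantees at a successor step. The use of the index set $\{\beta : \beta \leq \alpha\}$ (rather than $\{\beta : \beta < \alpha\}$) in \cref{def:alpha bisimulation} is exactly what makes this rank arithmetic come out cleanly, since it allows $\mqrank(\psi)$ to range over all ordinals strictly below $\alpha+1$.
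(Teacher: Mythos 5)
Your proposal is correct and follows essentially the same route as the paper: both prove the strengthened statement that $(M,\ol a) \sim_\beta (N,\ol b)$ implies agreement on all formulas of modal-quantifier rank at most $\beta$, handling the quantifier and modal steps by descending to $\sim_{\mqrank(\psi)}$ via \ref{item:isomorphism extension; def:alpha-bisimulation} and \ref{item:back and forth; def:alpha-bisimulation}. The paper's write-up is terser (it leaves the modal case as ``similar''), but the underlying argument and rank bookkeeping are identical to yours.
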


\begin{proof}
	This is a refinement of the proof of \cref{thm:bisimulation invariance}. We prove by induction on modal rank that whenever $(M, \ol a) \sim_\beta (N, \ol b)$, for any $\phi$ with $\mqrank(\phi) \leq \beta$ we have:
	\begin{equation*}
		\mc M, M \vD \phi[\ol a] \quad\Lra\quad \mc N, N \vD \phi[\ol b]
	\end{equation*}
	As above the base case is dealt with by \ref{item:partial isomorphism; def:alpha-bisimulation}. If $M \vD \exists x \psi([\ol a], x)$ there is $c \in M$ such that $M \vD \psi[\ol a, c]$; by \ref{item:isomorphism extension; def:alpha-bisimulation} there is $d \in N$ such that:
	\begin{equation*}
		(M, \ol a \concat c) \sim_{\mqrank(\psi)} (N, \ol b \concat d)
	\end{equation*}
	which by induction hypothesis means that $N \vD \psi[\ol b, d]$. The other cases are similar.
\end{proof}

The converses of \cref{thm:bisimulation invariance} and \cref{thm:alpha-bisimulation invariance} will be proved in \cref{sec:games} after we have established the connection with infinite games.

The following provide promised proofs of facts mentioned in \cref{sec:potentialist systems}. We first see that any potentialist system is iso-bisimilar with a thin one, using the `unravelling' construction, a generalisation from propositional modal logic (see \cite[Definition~4.51]{BlackburnPatrick2001Ml}).

\begin{definition}
	Let $\mc M$ be a potentialist system. The \emph{unravelling}, $\Unrav(\mc M)$, of $\mc M$ is the free category on $\mc M$, regarded as a potentialist system. Concretely, $\Unrav(\mc M)$ is specified as follows.
	\begin{itemize}
		\item The objects are finite sequences of embeddings:
		\begin{equation*}
			\vec\pi = \left(M_0 \xrightarrow{\pi_1} M_1 \xrightarrow{\pi_2} \cdots \xrightarrow{\pi_{n-1}} M_{n-1} \xrightarrow{\pi_n} M_n\right)
		\end{equation*}
		where $M_0, \ldots, M_n$ are worlds from $\mc M$ and $\pi_1, \ldots, \pi_n$ are embeddings from $\mc M$. We allow singleton sequences. The structure corresponding to $\vec\pi$ is $M_{\vec\pi} = M_n$.
		\item For every sequence $\vec\pi$ and every embedding $\pi_{n+1}$ with domain $M_{\vec\pi}$, we obtain a new sequence $\vec\pi \pi_{n+1}$ by adding $\pi_{n+1}$ at the end, and we have an arrow $\pi_{n+1} \colon \vec\pi \to \vec\pi \pi_{n+1}$.
	\end{itemize}
\end{definition}

\begin{remark}
	This definition is slightly informal. According to \cref{def:potentialist system}, worlds in a potentialist system should be $L$-structures, but here they are finite sequences. So to formally define the $\Unrav(\mc M)$ we should take the worlds to be the last structures $M_{\vec\pi} = M_n$, and somehow encode the sequence structure in the domain. For example, we could rename the elements of the domain as $M_{\vec\pi} \times \{(\pi_1 \cdots \pi_n)\}$. I won't worry too much about these technicalities from now on.
\end{remark}

Note that the unravelling of $\mc M$ is always a thin category. The following gives the bisimulation $\mc M \bis \Unrav(\mc M)$.

\begin{lemma}\label{res:unravelling bisimulation}
	Let $\mc M$ be a potentialist system. The relation $\approx$ between $\mc M$ and $\Unrav(\mc M)$, given by: $N \approx \vec\pi$ if and only if $M_{\vec\pi}=N$, together with the identity isomorphisms, is an iso-bisimulation which is bitotal.
\end{lemma}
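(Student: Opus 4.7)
The plan is to verify the three conditions (the iso-bisimulation property in both directions, plus bitotality) by direct unpacking of the definitions. A useful simplification up front is that every $\xi_{N,\vec\pi}$ is just $\id_N$, so the commuting squares required in the definition of iso-bisimulation degenerate to the assertion that the embedding on the $\mc M$-side and the underlying embedding of the $\Unrav(\mc M)$-side arrow coincide as maps $N \to N'$.

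First I would dispatch bitotality. Right totality is immediate: every $\vec\pi$ is related to its terminal structure $M_{\vec\pi} \in \mc M$ by definition of $\approx$. Left totality follows because every $N \in \mc M$ gives rise to the singleton sequence $(N) \in \Unrav(\mc M)$ with $M_{(N)} = N$, so $N \approx (N)$.

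Next, the iso-bisimulation conditions, under the standing assumption $N \approx \vec\pi$, i.e.\ $M_{\vec\pi} = N$. For the forward direction, given any embedding $\pi \colon N \to N'$ in $\mc M$, the construction of $\Unrav(\mc M)$ immediately supplies the extended sequence $\vec\pi \pi$ together with the basic arrow $\pi \colon \vec\pi \to \vec\pi \pi$; since $M_{\vec\pi \pi} = N'$ we have $N' \approx \vec\pi \pi$, and the square commutes on the nose because both identifications are identities and the same $\pi$ appears on both sides. For the backward direction, any arrow out of $\vec\pi$ in $\Unrav(\mc M)$ is, as a morphism in the free category, a (possibly empty) composite of basic extension arrows, targeting some $\vec\pi' = \vec\pi \pi_{n+1} \cdots \pi_{n+k}$, with underlying map the composite embedding $\rho \colon N \to M_{\vec\pi'}$; this $\rho$ lies in $\mc M$ by closure under composition, witnesses $M_{\vec\pi'} \approx \vec\pi'$, and makes the required square commute.

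The only point demanding even a moment's thought is interpreting the morphisms of the free category correctly, so that every arrow out of $\vec\pi$ does correspond to such an iterated extension and therefore to a genuine embedding in $\mc M$ with domain $M_{\vec\pi}$. Once this is clear, the verification is entirely routine and there is no real obstacle.
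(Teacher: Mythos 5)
Your proof is correct and follows essentially the same route as the paper's: relate each sequence to its terminal structure, use the one-step extension arrows of $\Unrav(\mc M)$ to mirror embeddings forwards, and read off the underlying (composite) embedding to go backwards, with the identity isomorphisms making the squares commute trivially. Your treatment is in fact slightly more careful than the paper's on two points the paper leaves implicit, namely bitotality and the fact that a general arrow of the free category is a composite of basic extensions whose underlying map lies in $\mc M$ by closure under composition.
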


\begin{proof}
	Assume that $N \approx \vec\pi$. Take $\rho \colon N \to N'$ in $\mc M$. Then $\vec\pi\rho \in \Unrav(\mc M)$ and $N' \approx \vec\pi\rho$. Furthermore, we have: 
	\begin{equation*}
		\rho \colon \vec\pi \to \vec\pi\rho
	\end{equation*}
	in $\Unrav(\mc M)$. Conversely, if $\rho \colon \vec\pi \to \vec\pi\rho$ in $\Unrav(\mc M)$ with codomain $N'$, then we have $\rho \colon N \to N'$ in $\mc M$.
\end{proof}

\begin{corollary}
	Every potentialist system is bisimilar to thin one via a bitotal iso-bisimulation.
\end{corollary}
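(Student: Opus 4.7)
The corollary is essentially an immediate consequence of \cref{res:unravelling bisimulation} combined with the remark made just after the definition of $\Unrav$, namely that the unravelling is always thin. The plan is therefore to invoke the lemma to produce the iso-bisimulation, and then separately verify that $\Unrav(\mc M)$ is indeed thin.

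More precisely, given a potentialist system $\mc M$, I would begin by applying \cref{res:unravelling bisimulation} to obtain a bitotal iso-bisimulation witnessing $\mc M \bis \Unrav(\mc M)$. Then I would check that $\Unrav(\mc M)$ is thin. The key observation is that, by construction, every arrow in $\Unrav(\mc M)$ is obtained from concatenating generating arrows of the form $\pi_{n+1} \colon \vec\pi \to \vec\pi\pi_{n+1}$, so an arrow $\vec\pi \to \vec\sigma$ exists precisely when $\vec\sigma$ extends $\vec\pi$, in which case the arrow is determined by the additional entries in $\vec\sigma$. In particular, there is at most one arrow between any pair of worlds, which is the definition of thinness.

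There is essentially no obstacle: the lemma does all the real work, and the thinness of $\Unrav(\mc M)$ is built into its construction as a free category on the underlying data. The only mild subtlety is the one flagged in the remark after the definition of $\Unrav$: formally, different sequences $\vec\pi \neq \vec\sigma$ with $M_{\vec\pi} = M_{\vec\sigma}$ are distinct worlds (one can enforce this by renaming the underlying domains, e.g.\@ tagging elements by the sequence), so that two sequences with the same final structure but different histories are not identified. With that convention in force, the reading of thinness is unambiguous, and the corollary follows.
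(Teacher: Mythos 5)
Your proposal is correct and matches the paper exactly: the corollary is an immediate consequence of \cref{res:unravelling bisimulation} together with the observation, made just before that lemma, that $\Unrav(\mc M)$ is always thin. Your additional remarks on why the unravelling is thin and on the formal tagging of worlds by their histories are consistent with the paper's own remark following the definition of $\Unrav$.
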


Let us see now that $\Mod(T)$ and $\Mode(T)$ are bisimilar. 

\begin{theorem}\label{thm:Mod T Mode T bis}
	Let $T$ be a first-order $L$-theory. There is a bitotal iso-bisimulation between the systems $\Mod(T)$ and $\Mode(T)$.
\end{theorem}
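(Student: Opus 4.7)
The plan is to construct the iso-bisimulation $(\approx, (\xi_{M,N}))$ by combining a diagonal base with an iterated relabelling. Start by declaring $(M, M) \in \approx$ with $\xi_{M,M} := \id_M$ for every model $M$ of $T$. This immediately secures bitotality, which will persist throughout the construction since we only ever \emph{add} pairs.

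The forward condition is handled uniformly by the diagonal. Given a substructure inclusion $\pi \colon M \hookrightarrow M'$ in $\Mod(T)$ and a pair $(M, N) \in \approx$ with $\xi := \xi_{M, N}$, take $N' := M'$ with the diagonal witness $(M', M') \in \approx$ (so $\xi_{M', N'} = \id_{M'}$), and set $\rho := \pi \circ \xi^{-1} \colon N \to M'$. Since $\xi^{-1}$ is an isomorphism and $\pi$ is an embedding, $\rho$ is a valid embedding in $\Mode(T)$, and the required diagram commutes: $\xi_{M', N'} \circ \pi = \pi = (\pi \circ \xi^{-1}) \circ \xi = \rho \circ \xi$.

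The back condition is the main obstacle. Given an embedding $\rho \colon N \to N'$ in $\Mode(T)$ and a pair $(M, N) \in \approx$ via $\xi$, I construct $M'$ by relabelling $N'$ so that the image $(\rho \circ \xi)(M)$ coincides with $M$ itself. Explicitly, let $f := \rho \circ \xi$; choose a set of fresh labels for $N' \setminus f(M)$, disjoint from $M$; set $M'$ to be $M$ together with these labelled elements; and define the bijection $g \colon M' \to N'$ to extend $f$ by the identity on labels. Transporting the $L$-structure from $N'$ along $g^{-1}$ makes $M' \cong N'$ (so $M'$ is a model of $T$), and a direct calculation using that $\xi$ is an isomorphism and $\rho$ an embedding shows $M \subseteq M'$ as an $L$-substructure. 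Include $(M', N') \in \approx$ with $\xi_{M', N'} := g$; by construction, $g \restr_M = f = \rho \circ \xi$, as required.

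The subtlety is that distinct back-closure demands must not clash on the same pair $(M', N')$ with incompatible isomorphisms. I avoid this by indexing the fresh labels by the tuple $(M, N, \xi, \rho)$, so that distinct inputs produce $M'$s with distinct underlying sets. The full iso-bisimulation is then defined by transfinite recursion: at each successor stage, close the current $\approx$ under the back condition by adding new pairs via the above construction; at limits, take unions. Global Choice (or working in MK as the paper stipulates in \cref{sec:potentialist systems}) suffices to make the choices. The forward condition is preserved automatically via the diagonal at each stage, and the limit relation is the desired bitotal iso-bisimulation.
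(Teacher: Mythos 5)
There is a genuine gap in your back condition, and it is not one that fresh labels can repair, because it occurs precisely when there are no fresh elements to label. Suppose $(M,N)\in{\approx}$ with isomorphism $\xi$, and let $\rho\colon N\to N$ be a nontrivial automorphism of $N$; this is a morphism of $\Mode(T)$ with $N'=N$. Then $f=\rho\circ\xi$ is already a bijection onto $N'$, so your construction yields $M'$ with underlying set exactly $M$ (hence, being a substructure of itself on the same domain, $M'=M$) and demands $\xi_{M,N}=\rho\circ\xi\neq\xi$, clashing with the value $\xi$ you have already assigned to the pair $(M,N)$; in the base case $N=M$ and $\xi=\id_M$ it clashes with the diagonal. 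Moreover the clash is forced rather than an artefact of your choices: if $\xi_{M',N}\circ\pi=\rho\circ\xi$ with $\pi$ an inclusion $M\sse M'$ and $\xi_{M',N}$ injective, then since $\rho\circ\xi$ already maps $M$ onto $N$ we must have $M'=M$, and hence $\xi_{M,N}=\rho\circ\xi$. So no relation assigning a \emph{single} isomorphism to the pair $(M,N)$ can satisfy the back condition for two distinct automorphisms of $N$ simultaneously, and any theory with a model admitting a nontrivial automorphism (e.g.\ pure sets) defeats the construction at the first closure step.

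This is exactly the obstruction the paper's proof is built to avoid. Rather than relating the original worlds directly, it first passes to $\Unrav(\Mode(T)^\bullet)$, whose worlds are finite sequences of embeddings, so that every morphism --- including one induced by an automorphism --- points to a genuinely \emph{new} world remembering its history; each such world $\vec\pi$ then carries a single canonical isomorphism $\Rename_{\vec\pi}$ onto the iteratively relabelled structure $\Flat(\vec\pi)$, and no pair ever receives two conflicting isomorphisms. The theorem is then obtained by composing the chain $\Mod(T)\bis\Mod(T)^\bullet\bis\Unrav(\Mode(T)^\bullet)\bis\Mode(T)^\bullet\bis\Mode(T)$ of bitotal iso-bisimulations. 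To salvage your direct approach you would need to introduce fresh copies of worlds on the $\Mode(T)$ side (which is what unravelling accomplishes), not merely fresh elements on the $\Mod(T)$ side.
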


We first need to define a basic disjointifying bisimulation, which makes sure that worlds are disjoint.

\begin{definition}
	Let $\mc M$ be a potentialist system. The \emph{disjointification} of $\mc M$, notation $\mc M^\bullet$, is the potentialist system obtained from $\mc M$ by replacing the domain of each world $M$ with $M \times \{M\}$, then taking all the embeddings from $\mc M$, suitably modified. The \emph{disjointifying bisimulation} $\rightthreetimes$ is the iso-bisimulation $\mc M \bis \mc M^\bullet$ defined by relating each world in $\mc M^\bullet$ with its original copy in $\mc M$:
	\begin{equation*}
		M \rightthreetimes M \times \{M\} 
	\end{equation*}
\end{definition}

Now, the idea to prove \cref{thm:Mod T Mode T bis} is that we first take the unravelling $\Unrav(\Mode(T)^\bullet)$, and then start renaming the elements of its worlds so that embeddings become inclusions. In order to do this consistently, we need to carry out this process iteratively through sequences of embeddings. See \cref{fig:iterative renaming necessary} for an illustration. Fortunately, every world in $\Unrav(\Mode(T)^\bullet)$ remembers a sequence of previous embeddings, and this allows us to carry out the process.

\begin{figure}
	\centering
	\begin{tikzpicture}
		\begin{scope}
			\draw[pattern=crosshatch, pattern color=pallette_orange] (0,0) circle (0.5);
			\draw[pattern=horizontal lines, pattern color=pallette_skyblue] (0,2) ellipse (1 and 0.75);
			\draw[pattern=north east lines, pattern color=pallette_bluishgreen] (0,4.5) ellipse (1.5 and 1);
			\draw[->] (0,0.55) -- (0,1.2);
			\draw[->] (0,2.8) -- (0,3.45);
		\end{scope}
		\begin{scope}[xshift=150]
			\draw[pattern=crosshatch, pattern color=pallette_orange] (0,0) circle (0.5);

			\draw[pattern=horizontal lines, pattern color=pallette_skyblue] (0,2) ellipse (1 and 0.75);
			\fill[white] (0,2) circle (0.5);
			\draw[pattern=crosshatch, pattern color=pallette_orange] (0,2) circle (0.5);

			\draw[pattern=north east lines, pattern color=pallette_bluishgreen] (0,4.5) ellipse (1.5 and 1);
			\fill[white] (0,4.5) ellipse (1 and 0.75);
			\draw[pattern=horizontal lines, pattern color=pallette_skyblue] (0,4.5) ellipse (1 and 0.75);
			\fill[white] (0,4.5) circle (0.5);
			\draw[pattern=crosshatch, pattern color=pallette_orange] (0,4.5) circle (0.5);
			\draw[right hook->] (0,0.55) -- (0,1.2);
			\draw[right hook->] (0,2.8) -- (0,3.45);
		\end{scope}
	\end{tikzpicture}
	\caption{An example showing that iteratively renaming elements is necessary. Blobs represent worlds and arrows embeddings. Hook arrows represent inclusions. To transform the sequence on the left so that embeddings become inclusions, we need to iteratively rename along the embeddings, as on the right.}
	\label{fig:iterative renaming necessary}
\end{figure}
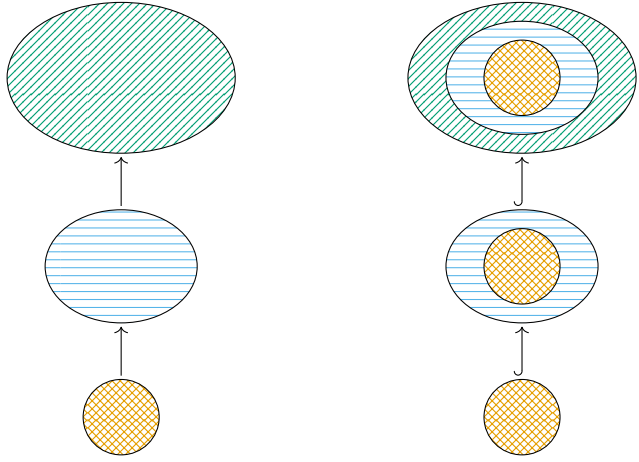

\begin{definition}
	Let $\mc M$ be a potentialist system, and take a sequence $\vec\pi = \left(M_0 \xrightarrow{\pi_1}  \cdots \xrightarrow{\pi_n} M_n\right) \in \Unrav(\mc M^\bullet)$. The structure $\Flat(\vec\pi)$ is the isomorphic copy of $M_n$ defined iteratively by renaming $\pi_1[M_0]$ as $M_0$ in $M_1$, then renaming $\pi_2[M_1]$ using the result of this, and so on, as illustrated in \cref{fig:iterative renaming necessary}.
\end{definition}

\begin{remark}
	The disjointified version $\mc M^\bullet$ of $\mc M$ is necessary because the structure $\Flat(\vec\pi)$ is only sensible if no renamed element becomes the same as a non-renamed element. For instance, in the case $n=1$, if there is $a \in M_0 \cap (M_1 \setminus \pi_1[M_0])$, then the elements  $a, \pi_1(a) \in M_0$ will have collapsed to a single one $a \in \Flat(\vec\pi)$.
\end{remark}

\begin{definition}
	Let $T$ be any first-order $L$-theory. Let the \emph{renaming bisimulation} $\propto$ between $\Mod(T)^\bullet$ and $\Unrav(\Mode(T)^\bullet)$ be the iso-bisimulation defined as follows. For any $\vec\pi \in \Unrav(\Mode(T)^\bullet)$ put:
	\begin{equation*}
		\Flat(\vec\pi) \propto \vec\pi
	\end{equation*}
	The isomorphism $\Rename_{\vec\pi} \colon \Flat(\vec\pi) \to M_{\vec\pi}$ is the natural renaming isomorphism.
\end{definition}

\begin{theorem}\label{thm:renaming bis total bis}
	The renaming bisimulation is a bitotal iso-bisimulation.
\end{theorem}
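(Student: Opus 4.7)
My plan is to verify the iso-bisimulation conditions directly, using a simple inductive description of $\Flat$ and $\Rename$. Bitotality on the right is immediate: $\Flat(\vec\pi) \propto \vec\pi$ for every $\vec\pi \in \Unrav(\Mode(T)^\bullet)$ by definition. For the left, each $M \in \Mod(T)^\bullet$ is related to the singleton sequence $(M)$, since $\Flat((M)) = M$ and $\Rename_{(M)} = \id_M$. The key technical ingredient I will need is the following, which falls out of unpacking the iterative definition: given $\vec\pi$ and an appended embedding $\tau \colon M_{\vec\pi} \to M'$, the structure $\Flat(\vec\pi)$ sits inside $\Flat(\vec\pi\tau)$ as an $L$-substructure, with underlying set $\Flat(\vec\pi\tau) = \Flat(\vec\pi) \cup (M' \setminus \tau[M_{\vec\pi}])$; moreover $\Rename_{\vec\pi\tau}$ acts as $\tau \circ \Rename_{\vec\pi}$ on $\Flat(\vec\pi)$ and as the identity on $M' \setminus \tau[M_{\vec\pi}]$.

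For the forward direction, suppose $\Flat(\vec\pi) \propto \vec\pi$ and let $\iota \colon \Flat(\vec\pi) \hookrightarrow M'$ be a substructure inclusion in $\Mod(T)^\bullet$. I set $\sigma \defeq \iota \circ \Rename_{\vec\pi}^{-1} \colon M_{\vec\pi} \to M'$, which is an embedding in $\Mode(T)^\bullet$, and take the arrow $\sigma \colon \vec\pi \to \vec\pi\sigma$ in $\Unrav(\Mode(T)^\bullet)$. The critical observation is that $\sigma[M_{\vec\pi}] = \iota[\Flat(\vec\pi)] = \Flat(\vec\pi)$ as a subset of $M'$, so the final renaming step in the construction of $\Flat(\vec\pi\sigma)$ sends each $\sigma(a) = \Rename_{\vec\pi}^{-1}(a)$ to itself. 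Hence $\Flat(\vec\pi\sigma) = M'$ and $\Rename_{\vec\pi\sigma} = \id_{M'}$, and the square commutes because $\sigma \circ \Rename_{\vec\pi} = \iota = \Rename_{\vec\pi\sigma} \circ \iota$.

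For the backward direction, suppose $\Flat(\vec\pi) \propto \vec\pi$ and let $\sigma \colon \vec\pi \to \vec\pi\sigma$ be an arrow in $\Unrav(\Mode(T)^\bullet)$. The inductive description supplies the required substructure inclusion $\iota \colon \Flat(\vec\pi) \hookrightarrow \Flat(\vec\pi\sigma)$ in $\Mod(T)^\bullet$, and the commutativity of the square is precisely the statement that $\Rename_{\vec\pi\sigma}$ restricted to $\Flat(\vec\pi)$ equals $\sigma \circ \Rename_{\vec\pi}$. The main obstacle will be carefully confirming that the disjointification guarantees the iterative renaming is unambiguous and that the resulting structures genuinely reside in $\Mod(T)^\bullet$: the $M \times \{M\}$ labelling on each world of $\Mode(T)^\bullet$ keeps elements of different worlds apart during the iterative renaming, and since $\Flat(\vec\pi\sigma)$ is isomorphic to the model $M'$, it can be treated as an element of $\Mod(T)^\bullet$ after an appropriate relabelling of its domain.
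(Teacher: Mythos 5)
Your proposal is correct and follows essentially the same route as the paper: left totality via the length-zero sequence, the forward direction by defining the new embedding as the inclusion precomposed with $\Rename_{\vec\pi}^{-1}$ so that $\Flat(\vec\pi\sigma)$ coincides with the target structure, and the backward direction via the substructure inclusion $\Flat(\vec\pi) \sse \Flat(\vec\pi\sigma)$. Your explicit inductive description of $\Flat$ and $\Rename$ just makes precise what the paper's argument uses implicitly.
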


\begin{proof}
	First note that for any $M_0 \in \Mod(T)^\bullet$, we have a length-$0$ sequence $\ep_{M_0} \in \Unrav(\Mode(T)^\bullet)$ with $M_0$ as the initial world, and $M_0 \propto \ep_{M_0}$; hence $\propto$ is left total. It is clearly right total. So we need to check the isomorphism extension property for iso-bisimulations. Take $\vec\pi \in \Unrav(\Mod(T)^\bullet)$, so that $\Flat(\vec\pi) \propto \vec\pi$.

	Assume that $\Flat(\vec\pi) \sse N$ in $\Mod(T)$. Define $\rho \colon M_{\vec\pi} \to N$ to be the result of renaming in $M_{\vec\pi}$ via $\Rename_{\vec\pi}^{-1}$, then applying the inclusion embedding into $N$. Then in fact:
	\begin{equation*}
		N = \Flat(\vec\pi \rho)
	\end{equation*}
	We have $\rho \colon \vec\pi \to \vec\pi \rho$ in $\Unrav(\Mode(T)^\bullet)$, and the following commutative diagram, where the hook arrow represents an inclusion. See also \cref{fig:defining rho from Pi; proof:renaming bis total bis} for a picture-based commutative diagram.
	\begin{equation*}
		\begin{tikzcd}
			{N = \Flat(\vec\pi \rho)} \arrow[rr, "\Rename_{\vec\pi \rho}"]        &  & N                     \\
			                                                   &  &                        \\
			{\Flat(\vec\pi)} \arrow[rr, "\Rename_{\vec\pi}"] \arrow[uu, hook] &  & M_{\vec\pi} \arrow[uu, "\rho"]
		\end{tikzcd}
	\end{equation*}

	\begin{figure}
		\centering
		\begin{tikzpicture}[xscale=0.9]
			\begin{scope}
				\draw[pattern=north east lines, pattern color=pallette_bluishgreen] (0,0) ellipse (1.5 and 1);
				\fill[white] (0,0) ellipse (1 and 0.75);
				\draw[pattern=horizontal lines, pattern color=pallette_skyblue] (0,0) ellipse (1 and 0.75);
				\fill[white] (0,0) circle (0.5);
				\draw[pattern=crosshatch, pattern color=pallette_orange] (0,0) circle (0.5);
				\coordinate (blt) at (0,1.25);
				\coordinate (blr) at (1.75,0);
				\node at (0,-1.5) {$\Flat(\vec\pi)$};
			\end{scope}
			\begin{scope}[yshift=100]
				\draw[pattern=fivepointed stars, pattern color=black] (0,0) ellipse (2 and 1.25);
				\fill[white] (0,0) ellipse (1.5 and 1);
				\draw[pattern=north east lines, pattern color=pallette_bluishgreen] (0,0) ellipse (1.5 and 1);
				\fill[white] (0,0) ellipse (1 and 0.75);
				\draw[pattern=horizontal lines, pattern color=pallette_skyblue] (0,0) ellipse (1 and 0.75);
				\fill[white] (0,0) circle (0.5);
				\draw[pattern=crosshatch, pattern color=pallette_orange] (0,0) circle (0.5);
				\coordinate (tlb) at (0,-1.5);
				\coordinate (tlr) at (2.25,0);
				\node at (0,1.8) {$N = \Flat(\vec\pi \rho)$};
			\end{scope}
			\begin{scope}[xshift=180]
				\draw[pattern=north east lines, pattern color=pallette_bluishgreen] (0,0) ellipse (1.5 and 1);
				\coordinate (brt) at (0,1.25);
				\coordinate (brl) at (-1.75,0);
				\node at (0,-1.5) {$M_{\vec\pi}$};
			\end{scope}
			\begin{scope}[xshift=180,yshift=100]
				\draw[pattern=fivepointed stars, pattern color=black] (0,0) ellipse (2 and 1.25);
				\fill[white] (0,0) ellipse (1.5 and 1);
				\draw[pattern=north east lines, pattern color=pallette_bluishgreen] (0,0) ellipse (1.5 and 1);
				\fill[white] (0,0) ellipse (1 and 0.75);
				\draw[pattern=horizontal lines, pattern color=pallette_skyblue] (0,0) ellipse (1 and 0.75);
				\fill[white] (0,0) circle (0.5);
				\draw[pattern=crosshatch, pattern color=pallette_orange] (0,0) circle (0.5);
				\coordinate (trb) at (0,-1.5);
				\coordinate (trl) at (-2.25,0);
				\node at (0,1.8) {$N$};
			\end{scope}
			\draw[right hook->] (blt) -- (tlb);
			\draw[->] (blr) -- (brl) node[midway,below] {$\Rename_{\vec\pi}$};
			\draw[->] (tlr) -- (trl) node[midway,above] {$\Rename_{\vec\pi \rho}$};
			\draw[->] (brt) -- (trb) node[midway,right] {$\rho$};
		\end{tikzpicture}
		\caption{Defining $\rho \colon M_{\vec\pi} \to N$ from $\Flat(\vec\pi) \sse N$. The hook arrow represents an inclusion.}
		\label{fig:defining rho from Pi; proof:renaming bis total bis}
	\end{figure}

	Conversely, take $\rho \colon \vec\pi \to \vec\pi \rho$ in $\Unrav(\Mode(T)^\bullet)$ with codomain $N$. Note that by definition $\Flat(\vec\pi) \sse \Flat(\vec\pi \rho)$, and this is a substructure relation. Moreover, the following diagram commutes.
	\begin{equation*}
		\begin{tikzcd}
			{\Flat(\vec\pi \rho)} \arrow[rr, "\Rename_{\vec\pi \rho}"]        &  & N                     \\
			                                                   &  &                        \\
			{\Flat(\vec\pi)} \arrow[rr, "\Rename_{\vec\pi}"] \arrow[uu, hook] &  & M_{\vec\pi} \arrow[uu, "\rho"]
		\end{tikzcd}
	\end{equation*}\qedhere
\end{proof}

\begin{proof}[Proof of \cref{thm:Mod T Mode T bis}]
	Indeed:
	\begin{equation*}
		\Mod(T) \bis \Mod(T)^\bullet \bis \Unrav(\Mode(T)^\bullet) \bis \Mode(T)^\bullet \bis \Mode(T)
	\end{equation*}
	via bitotal iso-bisimulations.
\end{proof}

\section{Modal Ehrenfeucht-Fraïssé games}
\label{sec:games}

We now turn to the connection between bisimulations and games. Modal Ehrenfeucht-Fraïssé games generalise their non-modal cousins (see \cite[\S3.2]{HodgesWilfrid1993Mt}).

\begin{definition}
	Let $(\mc M, M, \ol {a})$ and $(\mc N, N, \ol {b})$ be pointed systems of the same parameter size. The \emph{modal Ehrenfeucht-Fraïssé game}:
	\begin{equation*}
		\MEF(\mc M, M, \ol {a}, \mc N, N, \ol {b})
	\end{equation*}
	is played by Eloise and Abelard. The positions of the game are quadruples $(U, \ol {c}, Q, \ol {d})$, where $(\mc M, U, \ol {c})$ and $(\mc N, V, \ol {d})$ are pointed systems of the same parameter size. The initial position is $( M, \ol {a}, N, \ol {b})$. The game proceeds in moves alternating between Abelard and Eloise. From position $(U, \ol {c}, V, \ol {d})$, Abelard can make one of the following two kinds of moves.
	\begin{enumerate}[label=(\alph*)]
		\item Choose $u \in U$ or $v \in V$.
		\item Choose $\pi \colon U \to U'$ in $\mc M$ or $\rho \colon V \to V'$ in $\mc N$.
	\end{enumerate}
	Eloise then responds, respectively, as follows.
	\begin{enumerate}[label=(\alph*)]
		\item When Abelard chose $u \in U$, choose $v \in V$; otherwise choose $u \in U$. The new position is then $(U, \ol {c} u, V, \ol {d} v)$.
		\item When Abelard chose $\pi \colon U \to U'$, choose $\rho \colon V \to V'$; otherwise choose $\pi \colon U \to U'$. The new position is then $(U', \pi(\ol {c}), V', \rho(\ol {d}))$.
	\end{enumerate}
	A play in the game, consisting of an $\omega$-sequence of positions, is a win for Eloise if and only if for every position $(U, \ol {c}, V, \ol {d})$ in the play, the atomic types agree (in other words $\ol c \mapsto \ol d$ is a partial isomorphism):
	\begin{equation*}
		\Delta_0^U(\ol c) = \Delta_0^V(\ol d)
	\end{equation*}
	Often, we want to consider just the set of all positions in the game based on $\mc M$ and $\mc M'$, together with the winning conditions, without fixing an initial position. I will call this the \emph{board} and denote it by $\MEF(\mc M, \mc M')$. This becomes a game when we specify an initial position.
\end{definition}

The following theorem demonstrates the connection between bisimulations and modal Ehrenfeucht-Fraïssé games. It simultaneously generalises the connection between non-modal Ehrenfeucht-Fraïssé games (based on two fixed structures) and back-and-forth systems, on the one hand \cite[Lemma 3.2.2]{HodgesWilfrid1993Mt}, and bisimulation games and (propositional) bisimulations, on the other \cite[\S15.7]{10.7551/mitpress/9674.001.0001}.

\begin{theorem}\label{thm:Eloise winning iff bis}
	Eloise has a winning strategy in the game $\MEF(\mc M, M, \ol a, \mc N, N, \ol b)$ if and only if $(\mc M, M, \ol a)$ is bisimilar with $(\mc N, N, \ol b)$.
\end{theorem}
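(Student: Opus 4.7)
The plan is to prove the two directions by directly translating between winning strategies and bisimulations.

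For the $(\Leftarrow)$ direction, suppose $\sim$ is a bisimulation with $(M,\ol a) \sim (N,\ol b)$. Eloise plays so as to maintain the invariant that at every position $(U,\ol c, V,\ol d)$ reached we have $(U,\ol c) \sim (V,\ol d)$. Clauses \ref{item:isomorphism extension; def:bisimulation} and \ref{item:back and forth; def:bisimulation} of the definition of a bisimulation are exactly the statements that Eloise can respond: if Abelard picks an element $u \in U$, \ref{item:isomorphism extension; def:bisimulation} furnishes a matching $v \in V$; if Abelard picks an embedding $\pi \colon U \to U'$, \ref{item:back and forth; def:bisimulation} furnishes a matching $\rho \colon V \to V'$; and symmetrically on the other side. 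Because $(U,\ol c) \sim (V,\ol d)$ at every position, clause \ref{item:partial isomorphism; def:bisimulation} ensures that the atomic types agree throughout the play, so Eloise wins.

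For the $(\Rightarrow)$ direction, suppose Eloise has a winning strategy in $\MEF(\mc M, M, \ol a, \mc N, N, \ol b)$. The natural candidate bisimulation is
\begin{equation*}
    (U,\ol c) \sim (V,\ol d) \quad\Lra\quad \text{Eloise has a winning strategy in } \MEF(\mc M, U, \ol c, \mc N, V, \ol d).
\end{equation*}
The relation is non-empty by hypothesis. I check the three axioms in turn. For \ref{item:partial isomorphism; def:bisimulation}: since the initial position counts as part of the play, any winning strategy guarantees that the atomic types of $\ol c$ and $\ol d$ agree, so $\ol c \mapsto \ol d$ is a partial isomorphism. For \ref{item:isomorphism extension; def:bisimulation} and \ref{item:back and forth; def:bisimulation}: if Eloise has a winning strategy $\sigma$ starting at $(U,\ol c,V,\ol d)$, then for any element $u \in U$ Abelard might choose, letting $v$ be the response $\sigma$ prescribes yields a new position $(U,\ol c \concat u, V, \ol d \concat v)$ from which the tail of $\sigma$ is still winning; so $(U,\ol c \concat u) \sim (V, \ol d \concat v)$. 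The cases of an element on the right and of an embedding on either side are entirely analogous.

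The main technical point to be careful with is the meaning of a winning strategy in an $\omega$-length game with a safety-style winning condition: one must confirm that the initial position counts as part of the play, which both anchors \ref{item:partial isomorphism; def:bisimulation} and lets one speak of ``the tail of $\sigma$'' as itself a winning strategy in the subgame starting at the next position. Once this is set straight, both directions are essentially bookkeeping, and no subtler obstacles should arise.
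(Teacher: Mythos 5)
Your proposal is correct and follows essentially the same route as the paper: use the bisimulation as an invariant for Eloise's play in one direction, and read a bisimulation off the game in the other. The only (harmless) difference is that in the forward direction you relate positions from which Eloise has a winning strategy, whereas the paper relates positions reachable in plays of her winning strategy from the initial position; both satisfy \ref{item:partial isomorphism; def:bisimulation}--\ref{item:back and forth; def:bisimulation} for the reasons you give.
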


\begin{proof}
	Let $\sim$ be a bisimulation $(\mc M, M, \ol a) \bis (\mc N, N, \ol b)$. Eloise uses $\sim$ to see how she should play. Note that the conditions \ref{item:isomorphism extension; def:bisimulation} and \ref{item:back and forth; def:bisimulation} guarantee that no matter what Abelard plays, Eloise can always ensure the play stays at positions $(U, \ol {c},V, \ol {d})$ such that $(\mc M, U, \ol {c}) \sim (\mc N, V, \ol {d})$.\footnote{Note that in general we need global choice in order to define Eloise's strategy from the bisimulation.} Then \ref{item:partial isomorphism; def:bisimulation} guarantees that for such positions:
	\begin{equation*}
		\Delta_0^U(\ol c) = \Delta_0^V(\ol d)
	\end{equation*}
	meaning that the resulting play is a win for Eloise.

	Now assume that Eloise has a winning strategy in $\MEF(\mc M, M, \ol a, \mc N, N, \ol b)$. We can construct a bisimulation by playing as Abelard and seeing what Eloise does. Put $(U, \ol {c}) \sim (V, \ol {d})$ if and only if the position $(U, \ol {c},V, \ol {d})$ occurs in some play in which Eloise plays her winning strategy. Let us see that this is a bisimulation. Condition \ref{item:partial isomorphism; def:bisimulation} follows from the fact that the atomic types of such positions must agree. As for condition \ref{item:isomorphism extension; def:bisimulation}, take $u \in U$. We make Abelard choose this $u \in U$, and then according to her winning strategy Eloise then plays some $v \in V$, with the result that $(U, \ol {c} u) \sim (V, \ol {d} v)$. The converse direction, and condition \ref{item:back and forth; def:bisimulation}, are similar.
\end{proof}

In the first-order case, the existence of a winning strategy for Eloise in the Ehrenfeucht-Fraïssé game can be expressed by a formula in an infinitary language (see \cite[\S3.5]{HodgesWilfrid1993Mt}). An analogous result holds for modal Ehrenfeucht-Fraïssé games, allowing us to show that two pointed systems are bisimilar if and only if they are $L^\dia_\infty$-elementarily equivalent. This further yields a converse to the Fundamental Theorem of Bisimulations (\cref{thm:bisimulation invariance}).

\begin{theorem}\label{thm:MEF tfae}
	The following are equivalent.
	\begin{enumerate}[label=(\arabic*)]
		\item\label{item:winning; thm:MEF tfae}
			Eloise has a winning strategy in $\MEF(\mc M, M, \ol a, \mc N, N, \ol b)$.
		\item\label{item:elem equiv; thm:MEF tfae}
			$(\mc M, M, \ol a)$ is $L^\dia_\infty$-elementarily equivalent to $(\mc N, N, \ol b)$.
		\item\label{item:bisimulation; thm:MEF tfae}
			There is a bisimulation $(\mc M, M, \ol a) \bis (\mc N, N, \ol b)$.
	\end{enumerate}
\end{theorem}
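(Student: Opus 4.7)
The equivalence $(1)\Leftrightarrow(3)$ is \cref{thm:Eloise winning iff bis}, and $(3)\Rightarrow(2)$ is the Fundamental Theorem of Bisimulations (\cref{thm:bisimulation invariance}); the only direction left to establish is $(2)\Rightarrow(3)$. My plan is to prove it by showing that $L^\dia_\infty$-elementary equivalence is itself a bisimulation: define $(U, \ol c) \sim (V, \ol d)$ to mean $(\mc M, U, \ol c) \equiv^\dia_\infty (\mc N, V, \ol d)$, and verify \ref{item:partial isomorphism; def:bisimulation}--\ref{item:back and forth; def:bisimulation}. Clause \ref{item:partial isomorphism; def:bisimulation} is immediate since atomic formulas lie in $L^\dia_\infty$.

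For \ref{item:isomorphism extension; def:bisimulation} I would proceed by contradiction: assume $(U, \ol c) \sim (V, \ol d)$ and $u \in U$, but no $v \in V$ gives $(U, \ol c \concat u) \sim (V, \ol d \concat v)$. For each $v \in V$, pick an $L^\dia_\infty$-formula $\phi_v(\bar x, y)$ witnessing the inequivalence; replacing $\phi_v$ by its negation if necessary, arrange that $U \vD \phi_v[\ol c, u]$ and $V \nvDash \phi_v[\ol d, v]$. Since $V$ is a set, $\exists y \bigwedge_{v \in V} \phi_v(\bar x, y)$ is a bona fide $L^\dia_\infty$-formula: $u$ witnesses it in $(U, \ol c)$, but no element of $V$ witnesses it in $(V, \ol d)$, contradicting $\sim$.

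Condition \ref{item:back and forth; def:bisimulation} is the main obstacle, because the embeddings $\rho \colon V \to V'$ in $\mc N$ can form a proper class, so the naive modal analogue of the previous conjunction is no longer a legitimate $L^\dia_\infty$-formula. My plan here is to construct, by transfinite recursion on $\alpha$, Scott-style Hintikka formulas $H^\alpha_{(\mc M, M, \ol c)}(\bar x) \in L^{\dia,\alpha}_\infty$ such that $\mc M, M \vD H^\alpha_{(\mc M, M, \ol c)}[\ol c]$ and any pointed system satisfying $H^\alpha_{(\mc M, M, \ol c)}$ with parameters $\ol d$ is $\alpha$-bisimilar with $(\mc M, M, \ol c)$. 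The key Scott-trick observation at the modal successor step is that the disjunction inside the $\bx\bigvee$-clause can be indexed by the \emph{set} of distinct $H^\beta$-types realised at accessible worlds, rather than by the proper class of embeddings themselves, which keeps the formula in $L^\dia_\infty$. Applied to any $\pi \colon U \to U'$, the hypothesis $(U, \ol c) \sim (V, \ol d)$ together with $(\mc M, U) \vD \dia H^\alpha_{(\mc M, U', \pi(\ol c))}[\ol c]$ yields, for each $\alpha$, some $\rho_\alpha$ realising $\alpha$-bisimilarity. To collapse these into a single $\rho$ I would invoke stabilisation of the Hintikka hierarchy at the Scott rank $\theta$ of $(\mc M, U', \pi(\ol c))$: at $\theta$ one has $H^\theta \equiv H^{\theta+1}$, and monotonicity propagates this upward, so any $\rho_\theta$-witness realises $H^\alpha$ for every $\alpha$ and hence gives the required $L^\dia_\infty$-equivalent extension. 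The delicate part of the argument is thus the set-theoretic bookkeeping in the Hintikka-formula construction — verifying that the Scott-trick genuinely cuts the class-sized parameter down to a set at each ordinal stage, and that stabilisation occurs at a small enough Scott rank for the closing step to go through.
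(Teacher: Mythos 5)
Your reduction to the single implication \ref{item:elem equiv; thm:MEF tfae}$\Rightarrow$\ref{item:bisimulation; thm:MEF tfae} via \cref{thm:Eloise winning iff bis} and \cref{thm:bisimulation invariance} is fine, and your plan to show that $\equiv^\dia_\infty$ is itself a bisimulation is a legitimate alternative to the paper's route, which instead works with the game-rank hierarchy (\cref{thm:theta alpha iff rank geq alpha} and \cref{thm:theta alpha iff alpha elem equiv iff alpha equiv}) and only at the very end converts ``rank $\geq\alpha$ for all $\alpha$'' into a winning strategy via \cref{lem:position rank basic results}. Your treatment of \ref{item:partial isomorphism; def:bisimulation} and \ref{item:isomorphism extension; def:bisimulation} is correct, and your ``Scott-trick'' observation that the modal disjunctions can be indexed by the set of realised $H^\beta$-types rather than by the proper class of embeddings is exactly the device the paper uses to define $\theta^{M,\ol a}_{\alpha+1}$ (see the footnote in the proof of \cref{thm:theta alpha iff rank geq alpha}). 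Up to this point the two arguments are essentially parallel.

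The gap is in your closing step for \ref{item:back and forth; def:bisimulation}: the appeal to ``stabilisation of the Hintikka hierarchy at the Scott rank $\theta$ of $(\mc M, U', \pi(\ol c))$''. The classical existence of a Scott rank is a cardinality argument: the relations of $\alpha$-equivalence form a decreasing chain of equivalence relations on the \emph{set} of tuples of a structure, and a decreasing chain of partitions of a set must stabilise. Here the relevant ``tuples'' are the positions $(W,\ol e)$ reachable from $(U',\pi(\ol c))$ by element-extensions and embeddings, and in a class-sized potentialist system this is a proper class; the chain of $\alpha$-equivalences on it can keep strictly refining through all the ordinals, so no stabilisation ordinal need exist. (The systems of \cref{sec:omega categorical} illustrate the phenomenon: the worlds $M_\alpha$ are separated only at unboundedly high modal-quantifier ranks.) Note also that even granting $H^\theta\equiv H^{\theta+1}$ at the single position $(U',\pi(\ol c))$, the ``monotonicity propagates upward'' induction needs the implication $H^\theta\Rightarrow H^{\theta+1}$ at \emph{every} reachable position, since $H^{\theta+2}$ is built from the $H^{\theta+1}$-types of extensions. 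So as written the argument that some single $\rho$ works for every $\alpha$ does not go through; what you need is precisely the content of \cref{lem:position rank basic results}\ref{item:infty iff winning; lem:position rank basic results} (a position of rank $\geq\alpha$ for all $\alpha$ is outright winning), which the paper isolates as a lemma about the game rather than deriving from a Scott analysis. Either prove that fact directly in your framework or route the final step through the game as the paper does; asserting the existence of a Scott rank for a pointed class-sized system is not available.
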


The version of the equivalence $\text{\ref{item:winning; thm:MEF tfae}} \Lra \text{\ref{item:elem equiv; thm:MEF tfae}}$ for non-modal Ehrenfeucht-Fraïssé games is Theorem~3.5.2 in \cite{HodgesWilfrid1993Mt}. I follow the proof structure given there, modifying to account for:
\begin{enumerate*}[label=(\arabic*)]
	\item the additional modal aspect, and
	\item the fact that potentialist systems can be class-sized.
\end{enumerate*}
We first need some terminology concerning the ranks of game positions.

\begin{definition}
	The \emph{rank}, notation $\Rank(p)$, of a position $p=(M, \ol a, N, \ol b)$ in $\MEF(\mc M, \mc N)$ is either $-1$, an ordinal, or $\infty$, and is determined as follows.
	\begin{itemize}
		\item $\Rank(p) \geq 0$ if and only if $\Delta_0^M(\ol a) = \Delta_0^N(\ol {a})$.
		\item $\Rank(p) \geq \alpha + 1$ if and only if for every possible move made by Abelard from $p$, Eloise can then move to a position of rank at least $\alpha$.
		\item $\Rank(p) \geq \lam$, for $\lam$ a limit ordinal, if and only if $\Rank(p) \geq \alpha$ for all $\alpha < \lam$.
	\end{itemize}
\end{definition}

\begin{lemma}\label{lem:alpha equiv iff alpha bis}
	For any ordinal $\alpha$ and pointed systems $(\mc M, M, \ol a)$ and $(\mc N, N, \ol b)$ of the same parameter size we have:
	\begin{equation*}
		\Rank(M, \ol a, N, \ol b) \geq \alpha \text{ in }\MEF(\mc M, \mc N) \quad\Lra\quad (\mc M, M, \ol a) \bis_{\alpha} (\mc N, N, \ol b)
	\end{equation*}
\end{lemma}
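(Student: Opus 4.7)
The plan is to show both directions by extracting one structure from the other; in both cases the notions capture the same ``Eloise can respond for $\beta$-many rounds'' phenomenon, just packaged differently. For the forward direction ($\Ra$), assuming $\Rank(M, \ol a, N, \ol b) \geq \alpha$, I would define for each $\beta \leq \alpha$ the relation
\begin{equation*}
	(U, \ol c) \sim_\beta (V, \ol d) \quad\Lra\quad \Rank(U, \ol c, V, \ol d) \geq \beta
\end{equation*}
on pairs of matching parameter size, and verify that the resulting collection is an $\alpha$-bisimulation witnessing $\bis_\alpha$ at the point $(M, \ol a)$ and $(N, \ol b)$. Non-emptiness of each $\sim_\beta$ follows since $\Rank \geq \alpha \geq \beta$. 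Property \ref{item:partial isomorphism; def:alpha-bisimulation} comes from $\Rank \geq 0$, which is exactly atomic-type agreement. For \ref{item:isomorphism extension; def:alpha-bisimulation} and \ref{item:back and forth; def:alpha-bisimulation}, if $(U, \ol c) \sim_\beta (V, \ol d)$ and $\gamma < \beta$, then $\gamma + 1 \leq \beta$, so $\Rank \geq \gamma + 1$; unpacking the rank definition, this says precisely that Eloise can counter any Abelard move — either the element-choice kind or the embedding-choice kind — to reach a position of rank $\geq \gamma$, i.e.\@ a $\sim_\gamma$-related pair.

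For the backward direction ($\La$), given an $\alpha$-bisimulation $(\sim_\beta)_{\beta \leq \alpha}$ with $(M, \ol a) \sim_\alpha (N, \ol b)$, I would prove by transfinite induction on $\beta \leq \alpha$ the implication
\begin{equation*}
	(U, \ol c) \sim_\beta (V, \ol d) \quad\Ra\quad \Rank(U, \ol c, V, \ol d) \geq \beta.
\end{equation*}
The base case $\beta = 0$ is \ref{item:partial isomorphism; def:alpha-bisimulation}, since a partial isomorphism of the tuples is just atomic-type agreement. For a successor $\beta = \gamma + 1$, conditions \ref{item:isomorphism extension; def:alpha-bisimulation} and \ref{item:back and forth; def:alpha-bisimulation} guarantee that every Abelard move from $(U, \ol c, V, \ol d)$ admits an Eloise response landing in a $\sim_\gamma$-related pair, which by induction has rank $\geq \gamma$; hence the original position has rank $\geq \gamma + 1$. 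For a limit $\beta = \lambda$, one runs the successor argument separately for each $\gamma < \lambda$: \ref{item:isomorphism extension; def:alpha-bisimulation}/\ref{item:back and forth; def:alpha-bisimulation} supply Eloise responses into $\sim_\gamma$ (hence, by induction, into positions of rank $\geq \gamma$), which gives rank $\geq \gamma + 1$ at the original position for every $\gamma < \lambda$, and therefore rank $\geq \lambda$.

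The only real delicacy is the limit case of the backward direction, where witnesses at every level $\gamma < \lambda$ need to be assembled into the conclusion $\Rank \geq \lambda$. This works because the bisimulation definition at stage $\lambda$ demands a witness \emph{for each} $\gamma < \lambda$ separately (rather than a single uniform response), which is exactly the input the rank definition requires at the limit. Everything else is a routine unwinding of definitions along the successor/limit structure of the ordinals, and the correspondence between Abelard's two move types and the two dynamic clauses \ref{item:isomorphism extension; def:alpha-bisimulation} and \ref{item:back and forth; def:alpha-bisimulation} of an $\alpha$-bisimulation.
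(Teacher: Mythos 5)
Your proposal is correct and follows essentially the same route as the paper: the paper likewise defines $(U,\ol c)\sim_\beta (V,\ol d)$ by $\beta \leq \min\{\Rank(U,\ol c,V,\ol d),\alpha\}$ for the forward direction and observes that any instance of an $\alpha$-bisimulation witnesses the corresponding rank bound for the converse, leaving the transfinite induction implicit where you spell it out. Your more detailed treatment of the successor/limit cases is a faithful unwinding of what the paper's one-line proof asserts.
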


\begin{proof}
	This is a refinement of the proof of \cref{thm:Eloise winning iff bis}. Put $(U, \ol {c}) \sim_\beta (V, \ol {d})$ if and only if $\beta \leq \min\{\Rank(U, \ol {c}, V, \ol {d}), \alpha\}$. This then gives an $\alpha$-bisimulation $(\mc M, M, \ol a) \bis_{\alpha} (\mc N, N, \ol b)$. Conversely, if if we have any $\alpha$-bisimulation $(\mc M, M, \ol a) \bis_{\alpha} (\mc N, N, \ol b)$, then any instance $(U, \ol {c}) \sim_\beta (V, \ol {d})$ witnesses that $\beta \leq \Rank(U, \ol {c}, V, \ol {d})$.
\end{proof}

\begin{lemma}\label{lem:position rank basic results}\
	\begin{enumerate}[label=(\arabic*)]
		\item\label{item:alpha to alpha plus; lem:position rank basic results}
			If $\alpha$ is an ordinal such that every position of rank at least $\alpha$ is also at least $\alpha+1$, then every position of rank at least $\alpha$ is winning for Eloise.
		\item\label{item:infty iff winning; lem:position rank basic results}
			A position has rank $\infty$ if and only if it is winning for Eloise. Hence:
			\begin{equation*}
				(\mc M, M, \ol a) \bis (\mc N, N, \ol b) \quad\Lra\quad \forall \alpha \colon (\mc M, M, \ol a) \bis_\alpha (\mc N, N, \ol b)
			\end{equation*}
	\end{enumerate}
\end{lemma}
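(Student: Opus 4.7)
For part~\cref{item:alpha to alpha plus; lem:position rank basic results}, I would build Eloise's winning strategy directly from the hypothesis. At any position $q$ with $\Rank(q) \geq \alpha$, the hypothesis gives $\Rank(q) \geq \alpha + 1$, and the definition of this rank supplies an Eloise response of rank $\geq \alpha$ to any Abelard move. Using global choice to pick such a response at each step, Eloise maintains the invariant ``current position has rank $\geq \alpha$'' throughout the play; since this entails rank $\geq 0$ at every position, atomic types agree throughout, which is the winning condition.

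For the winning $\Rightarrow$ rank $\infty$ direction of part~\cref{item:infty iff winning; lem:position rank basic results}, I would fix a winning strategy $\sigma$ for Eloise from $p$ and show by transfinite induction on $\alpha$ that every position from which $\sigma$ restricts to a winning strategy has rank $\geq \alpha$. The base case is atomic type agreement, which is forced by $\sigma$ being winning. For successors, $\sigma$ responds to any Abelard move with some $q$ from which the restricted strategy remains winning, giving $\Rank(q) \geq \alpha$ by the inductive hypothesis and hence $\Rank(p) \geq \alpha + 1$; the limit case is immediate. For the converse, I would reduce to part~\cref{item:alpha to alpha plus; lem:position rank basic results} via a stabilisation argument. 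Working within the Grothendieck-Zermelo universe framework discussed in \cref{sec:potentialist systems}, $\mc M$ and $\mc N$ can be taken as sets in a sufficiently large ambient universe, so the board $\MEF(\mc M, \mc N)$ is a set. The decreasing sequence $R_\alpha = \{q : \Rank(q) \geq \alpha\}$ must then stabilise at some ordinal $\alpha^*$ with $R_{\alpha^*} = R_{\alpha^* + 1}$; a short transfinite induction using the recursive definition of rank then gives $R_{\alpha^*} = R_{\alpha^* + \beta}$ for all ordinals $\beta$, so $R_{\alpha^*}$ coincides with the collection of rank-$\infty$ positions. The hypothesis of part~\cref{item:alpha to alpha plus; lem:position rank basic results} holds at $\alpha^*$, so every rank-$\infty$ position is winning for Eloise. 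The ``Hence'' clause then chains \cref{thm:Eloise winning iff bis}, the just-proved equivalence, and \cref{lem:alpha equiv iff alpha bis} to give $\bis$ iff $\bis_\alpha$ for all $\alpha$.

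The main obstacle is the converse direction of part~\cref{item:infty iff winning; lem:position rank basic results}: passing from the pointwise data ``$\Rank(p) \geq \alpha$ for every ordinal $\alpha$'' to a single coherent winning strategy requires that the rank function actually stabilise at some ordinal. A direct strategy construction in ZFC alone would fail for embedding moves, where Eloise's responses can form a proper class and the naive supremum of their ordinal ranks may fail to exist as an ordinal; this is why the argument must appeal to the universe framework (or an equivalent class-theoretic device such as Morse-Kelley set theory) in order to reduce back to the cleaner ordinal-indexed statement of part~\cref{item:alpha to alpha plus; lem:position rank basic results}.
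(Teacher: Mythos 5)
Your argument is correct, but it is worth noting that the paper does not actually prove this lemma at all: it simply cites Lemma~3.4.1 and Theorem~3.4.2 of Hodges, where the analogous facts are established for (set-sized, non-modal) Ehrenfeucht--Fra\"iss\'e games. What you have written is essentially a reconstruction of that standard argument, adapted to the modal board: part~(1) by maintaining the invariant ``rank $\geq \alpha$'' (which entails rank $\geq 0$, hence agreement of atomic types, at every position), the forward direction of part~(2) by transfinite induction along a fixed winning strategy, and the converse by stabilising the decreasing sequence $R_\alpha = \{q : \Rank(q) \geq \alpha\}$ at some $\alpha^*$ with $R_{\alpha^*} = R_{\alpha^*+1}$ and then invoking part~(1). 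The one place where you go beyond what either the paper or Hodges makes explicit is the proper-class issue: when Abelard's embedding moves range over a proper class, a na\"ive supremum of the ranks of Eloise's candidate responses need not exist, so the stabilisation of $(R_\alpha)$ is not free in \ZFC{} alone. Your appeal to the Grothendieck--Zermelo universe framework (or Morse--Kelley) from \cref{sec:potentialist systems} to make the board a set is exactly the right fix, and is consistent with how the paper says it handles such definability issues; it would be worth flagging that the same caveat silently underlies the paper's citation of Hodges, whose setting is purely set-sized. The chaining of \cref{thm:Eloise winning iff bis} and \cref{lem:alpha equiv iff alpha bis} for the ``Hence'' clause is also correct, and in fact yields the biconditional rather than just the stated implication.
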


\begin{proof}
	These facts are proved more generally in Lemma~3.4.1 and Theorem 3.4.2 of \cite{HodgesWilfrid1993Mt}.
\end{proof}

As mentioned above, we can express the property of having rank at least $\alpha$ using an $L^\dia_\infty$-formula of modal-quantifier rank $\alpha$. I will omit the $\mc M$ from the superscript of $\theta$ whenever possible.

\begin{theorem}\label{thm:theta alpha iff rank geq alpha}
	Let $(\mc M, M, \ol a)$ be a pointed system of parameter size $n$. Take an ordinal $\alpha$. There is an $n$-variable formula $\theta^{\mc M, M, \ol a}_{\alpha}$ in $L^{\dia}_\infty$ of modal-quantifier rank exactly $\alpha$ such that for every pointed system $(\mc N, N, \ol b)$ of parameter size $n$, we have:
	\begin{equation*}
		\mc N, N \vD \theta^{\mc M, M, \ol a}_{\alpha}\left[\ol b\right] \quad\Lra\quad \Rank(M, \ol a, N, \ol b) \geq \alpha \text{ in }\MEF(\mc M, \mc N)
	\end{equation*}
\end{theorem}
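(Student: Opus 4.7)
The plan is to define $\theta^{\mc M, M, \ol a}_\alpha$ by transfinite recursion on $\alpha$, mirroring the recursive clauses in the rank definition for $\MEF(\mc M, \mc N)$, and then verify the stated equivalence by a parallel induction. This is the modal lift of the analogous classical construction for non-modal Ehrenfeucht-Fraïssé games \cite[\S3.5]{HodgesWilfrid1993Mt}: each $\theta_\alpha$ asserts that its evaluation point admits all the responses Eloise needs to keep the game at rank at least $\alpha$.

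For the base case $\alpha = 0$, take $\theta^{\mc M, M, \ol a}_0(\ol x)$ to be the atomic diagram of $\ol a$ in $M$, that is, the conjunction of $\psi(\ol x)$ for each atomic $L$-formula $\psi$ with $\mc M, M \vD \psi[\ol a]$, together with $\neg \psi(\ol x)$ for each atomic $\psi$ with $\mc M, M \nvD \psi[\ol a]$. This is quantifier-free of modal-quantifier rank $0$, and $\mc N, N \vD \theta^{\mc M, M, \ol a}_0[\ol b]$ is equivalent to $\Delta_0^M(\ol a) = \Delta_0^N(\ol b)$, which by definition is $\Rank(M, \ol a, N, \ol b) \geq 0$. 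For the successor step, let $\theta^{\mc M, M, \ol a}_{\alpha+1}(\ol x)$ be the conjunction of the four formulas
\begin{gather*}
    \textstyle\bigwedge_{c \in M}\,\exists y\,\theta^{\mc M, M, \ol a \concat c}_\alpha(\ol x, y), \qquad \textstyle\forall y\,\bigvee_{c \in M}\,\theta^{\mc M, M, \ol a \concat c}_\alpha(\ol x, y), \\
    \textstyle\bigwedge_{\pi \colon M \to M' \in \mc M}\,\dia\,\theta^{\mc M, M', \pi(\ol a)}_\alpha(\ol x), \qquad \textstyle\bx\,\bigvee_{\pi \colon M \to M' \in \mc M}\,\theta^{\mc M, M', \pi(\ol a)}_\alpha(\ol x),
\end{gather*}
each corresponding to one of Abelard's four possible moves from the position $(M, \ol a, N, \ol b)$ paired with Eloise's ability to respond. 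For a limit $\lambda$, set $\theta^{\mc M, M, \ol a}_\lambda = \bigwedge_{\beta < \lambda} \theta^{\mc M, M, \ol a}_\beta$. An easy parallel induction confirms that $\mqrank(\theta^{\mc M, M, \ol a}_\alpha) = \alpha$ exactly: the successor inherits rank $\alpha+1$ from its $\exists y$ and $\dia$ conjuncts, and the limit yields $\sup_{\beta < \lambda}\beta = \lambda$. The main equivalence then follows by induction on $\alpha$, where the four conjuncts of the successor case unpack directly into the four branches of the rank recursion; the limit case is immediate from the induction hypothesis and the clause $\Rank \geq \lambda \Lra \forall \alpha < \lambda\,\Rank \geq \alpha$.

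The principal obstacle is a size issue: when $\mc M$ is class-sized, the indexings $\{\pi \colon M \to M' \in \mc M\}$ appearing in the $\bigwedge_\pi$ and $\bigvee_\pi$ above may be proper classes, whereas $L^\dia_\infty$ admits only set-sized conjunctions and disjunctions. The resolution is to replace these indexings, without changing semantic content, by the collection of \emph{distinct} subformulas $\theta^{\mc M, M', \pi(\ol a)}_\alpha$ that actually appear, which must be shown to form a set at each stage. This can be achieved by a simultaneous induction establishing a cardinal bound on the number of $n$-variable formulas of a given modal-quantifier rank produced by the construction, or handled meta-theoretically by working in Morse-Kelley set theory or with Grothendieck-Zermelo universes, as flagged in the foundational remark near the end of \cref{sec:potentialist systems}.
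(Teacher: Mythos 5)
Your proposal is correct and follows essentially the same route as the paper's own proof: the same atomic base case, the same four-conjunct successor formula mirroring Abelard's possible moves, the same limit conjunction, and the same resolution of the class-size issue by observing that only set-many distinct formulas $\theta^{\mc M, M', \pi(\ol a)}_\alpha$ arise. (If anything, your base case is slightly more careful than the paper's, since you explicitly include the negated atomic formulas needed to force equality, rather than mere inclusion, of atomic types.)
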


\begin{proof}
	The formula $\theta^{M, \ol a}_{\alpha}$ is defined by recursion on $\alpha$. 
	\begin{itemize}
		\item 
			First:
			\begin{equation*}
				\theta^{M, \ol a}_0(\ol x) \defeq \bigwedge \Delta_0^M(\ol a)
			\end{equation*}
		\item 
			For successor ordinals $\alpha+1$, we need to make sure that Eloise can always force to a position of rank at least $\alpha$, no matter what Abelard chooses. Considering the definition of the game, this requires expressing the following.
			\begin{enumerate}[label=(\roman*)]
				\item For every $u \in M$ there is $y \in N$ such that $\theta^{M,(\ol a u)}_{\alpha}(\ol x, y)$ holds at $N$.
				\item For every $y \in N$ there is $u \in M$ such that $\theta^{M,(\ol a u)}_{\alpha}(\ol x, y)$ holds at $N$.
				\item For every $\pi \colon M \to M'$ there is $\rho \colon N \to N'$ such that $\theta^{M,\pi(\ol a)}_{\alpha}(\rho(\ol x))$ holds at $N'$.
				\item For every $\rho \colon N \to N'$ there is $\pi \colon M \to M'$ such that $\theta^{M,\pi(\ol a)}_{\alpha}(\rho(\ol x))$ holds at $N'$.
			\end{enumerate}
			The first two conditions can be expressed by replacing quantification over $M$ with conjunctions and disjunctions. For the other two conditions, this strategy does not immediately work, since there may be class-many embeddings $\pi \colon M \to M'$, but we are only allowed to take set-sized conjunctions and disjunctions. Fortunately, it will follow from the induction that there are only set-many formulas of the form $\theta^{M,\pi(\ol a)}_\alpha(\ol x)$.\footnote{In fact, one can show that there are only set-many formulas of rank $\alpha$ up to equivalence.} Let $\Phi$ be the set of formulas $\theta^{M,\pi(\ol a)}_\alpha(\ol x)$ for $\pi \colon M \to M'$ (Even though there may be class-many \emph{names} $\theta^{M,\pi(\ol a)}_\alpha$, there are only set-many formulas named by them.). We can now define:
			\begin{equation*}
				\theta^{M, \ol a}_{\alpha+1}(\ol x) \defeq 
					\left(
					\begin{array}{ll}
						       & \bigwedge_{u \in M} \exists y \theta^{M,(\ol a \concat u)}_{\alpha}(\ol x, y) \\[0.3em]
						\wedge & \forall y \bigvee_{u \in M} \theta^{M,(\ol a \concat u)}_{\alpha}(\ol x, y) \\[0.3em]
						\wedge & \bigwedge_{\phi \in \Phi} \dia \phi \\[0.3em]
						\wedge & \bx \bigvee_{\phi \in \Phi} \phi
					\end{array}
					\right)
			\end{equation*}

		\item 
			Finally, when $\gamma$ is a limit, we define:
			\begin{equation*}
				\theta^{M, \ol a}_{\gamma}(\ol x) \defeq \bigwedge_{\alpha < \gamma} \theta^{M, \ol a}_{\alpha}(\ol x) 
			\end{equation*}
	\end{itemize}
	Note that at each stage the definition of $\theta^{M, \ol a}_{\alpha}$ corresponds with the condition that $\Rank(M, \ol a, N, \ol b) \geq \alpha$.
\end{proof}

With this key piece we can establish a correspondence between all of the notions we have of `equivalence up to rank $\alpha$'.

\begin{theorem}\label{thm:theta alpha iff alpha elem equiv iff alpha equiv}
	Let $(\mc M, M, \ol a)$ and $(\mc N, N, \ol b)$ be pointed systems of the same parameter size, and let $\alpha$ be an ordinal. The following are equivalent.
	\begin{enumerate}[label=(\arabic*)]
		\item\label{item:theta alpha; thm:theta alpha iff alpha elem equiv iff alpha equiv}
			$\mc M, M \vD \theta^{\mc N, N, \ol b}_{\alpha} [\ol{a}]$.
		\item\label{item:alpha equiv; thm:theta alpha iff alpha elem equiv iff alpha equiv}
			$\Rank(M, \ol a, N, \ol b) \geq \alpha \text{ in }\MEF(\mc M, \mc N)$.
		\item\label{item:alpha bis; thm:theta alpha iff alpha elem equiv iff alpha equiv}
			$(\mc M, M, \ol a) \bis_{\alpha} (\mc N, N, \ol b)$.
		\item\label{item:elem equiv; thm:theta alpha iff alpha elem equiv iff alpha equiv}
			$(\mc M, M, \ol a) \equiv^{\dia,\alpha+1}_\infty (\mc N, N, \ol b)$.
	\end{enumerate}
\end{theorem}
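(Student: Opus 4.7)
The plan is to establish the cycle of implications
\[
\text{\ref{item:alpha equiv; thm:theta alpha iff alpha elem equiv iff alpha equiv}} \Lra \text{\ref{item:alpha bis; thm:theta alpha iff alpha elem equiv iff alpha equiv}} \Ra \text{\ref{item:elem equiv; thm:theta alpha iff alpha elem equiv iff alpha equiv}} \Ra \text{\ref{item:theta alpha; thm:theta alpha iff alpha elem equiv iff alpha equiv}} \Ra \text{\ref{item:alpha equiv; thm:theta alpha iff alpha elem equiv iff alpha equiv}},
\]
using the previously stated lemmas and theorems to do most of the work. The equivalence \ref{item:alpha equiv; thm:theta alpha iff alpha elem equiv iff alpha equiv}$\Lra$\ref{item:alpha bis; thm:theta alpha iff alpha elem equiv iff alpha equiv} is exactly \cref{lem:alpha equiv iff alpha bis}, and the implication \ref{item:alpha bis; thm:theta alpha iff alpha elem equiv iff alpha equiv}$\Ra$\ref{item:elem equiv; thm:theta alpha iff alpha elem equiv iff alpha equiv} is exactly \cref{thm:alpha-bisimulation invariance} combined with the observation that $\mqrank\bigl(\theta^{\mc M, M, \ol a}_\alpha\bigr) = \alpha < \alpha+1$.

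For \ref{item:elem equiv; thm:theta alpha iff alpha elem equiv iff alpha equiv}$\Ra$\ref{item:theta alpha; thm:theta alpha iff alpha elem equiv iff alpha equiv} the key observation is that the formula $\theta^{\mc N, N, \ol b}_{\alpha}$ belongs to $L^{\dia,\alpha+1}_\infty$, and it is always satisfied at its own point of origin:
\begin{equation*}
    \mc N, N \vD \theta^{\mc N, N, \ol b}_{\alpha}\bigl[\ol b\bigr].
\end{equation*}
This is because by \cref{thm:theta alpha iff rank geq alpha} this amounts to the claim that $\Rank(N, \ol b, N, \ol b) \geq \alpha$ in $\MEF(\mc N, \mc N)$, and Eloise has the \emph{mirror strategy} of copying every move of Abelard to the opposite side, which keeps positions of the form $(V, \ol c, V, \ol c)$ throughout the play; each such position has rank $\infty$. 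Having established this, the $L^{\dia,\alpha+1}_\infty$-elementary equivalence assumed in \ref{item:elem equiv; thm:theta alpha iff alpha elem equiv iff alpha equiv} immediately transfers satisfaction of $\theta^{\mc N, N, \ol b}_{\alpha}$ from $(\mc N, N, \ol b)$ to $(\mc M, M, \ol a)$.

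Finally, for \ref{item:theta alpha; thm:theta alpha iff alpha elem equiv iff alpha equiv}$\Ra$\ref{item:alpha equiv; thm:theta alpha iff alpha elem equiv iff alpha equiv} I would apply \cref{thm:theta alpha iff rank geq alpha} with the roles of the two pointed systems exchanged: taking $(\mc N, N, \ol b)$ as the ``origin'' system there, \ref{item:theta alpha; thm:theta alpha iff alpha elem equiv iff alpha equiv} becomes $\Rank(N, \ol b, M, \ol a) \geq \alpha$ in $\MEF(\mc N, \mc M)$. This is equal to $\Rank(M, \ol a, N, \ol b) \geq \alpha$ in $\MEF(\mc M, \mc N)$ because the definition of rank and the rules of the modal Ehrenfeucht--Fraïssé game are entirely symmetric in the two sides (Abelard's move set and Eloise's winning condition are both invariant under swapping), giving \ref{item:alpha equiv; thm:theta alpha iff alpha elem equiv iff alpha equiv}.

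The main non-routine ingredient, and the point I would treat most carefully in the write-up, is the \ref{item:elem equiv; thm:theta alpha iff alpha elem equiv iff alpha equiv}$\Ra$\ref{item:theta alpha; thm:theta alpha iff alpha elem equiv iff alpha equiv} direction: one has to recognise that $\theta^{\mc N, N, \ol b}_\alpha$ is the \emph{right} formula to test, and to verify the self-satisfaction claim via Eloise's mirror strategy. The symmetry of rank used in \ref{item:theta alpha; thm:theta alpha iff alpha elem equiv iff alpha equiv}$\Ra$\ref{item:alpha equiv; thm:theta alpha iff alpha elem equiv iff alpha equiv} is conceptually trivial but should be stated explicitly since \cref{thm:theta alpha iff rank geq alpha} is only formulated with one orientation.
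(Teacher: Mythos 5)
Your proposal is correct and follows essentially the same route as the paper: the equivalence of \ref{item:alpha equiv; thm:theta alpha iff alpha elem equiv iff alpha equiv} and \ref{item:alpha bis; thm:theta alpha iff alpha elem equiv iff alpha equiv} via \cref{lem:alpha equiv iff alpha bis}, the implication to \ref{item:elem equiv; thm:theta alpha iff alpha elem equiv iff alpha equiv} via \cref{thm:alpha-bisimulation invariance}, and the closing step via the self-satisfaction $\mc N, N \vD \theta^{\mc N, N, \ol b}_{\alpha}[\ol b]$, obtained from the fact that the diagonal position is winning for Eloise and hence has rank at least $\alpha$, exactly as in the paper. The only differences are cosmetic: you make explicit the side-swapping symmetry of $\Rank$ needed to apply \cref{thm:theta alpha iff rank geq alpha} in the orientation of \ref{item:theta alpha; thm:theta alpha iff alpha elem equiv iff alpha equiv} (which the paper leaves implicit), and the remark about $\mqrank$ belongs with the \ref{item:elem equiv; thm:theta alpha iff alpha elem equiv iff alpha equiv}$\Ra$\ref{item:theta alpha; thm:theta alpha iff alpha elem equiv iff alpha equiv} step rather than with \ref{item:alpha bis; thm:theta alpha iff alpha elem equiv iff alpha equiv}$\Ra$\ref{item:elem equiv; thm:theta alpha iff alpha elem equiv iff alpha equiv}, where it is not needed.
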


\begin{proof}
	$\text{\ref{item:theta alpha; thm:theta alpha iff alpha elem equiv iff alpha equiv}} \Lra \text{\ref{item:alpha equiv; thm:theta alpha iff alpha elem equiv iff alpha equiv}}$ is by \cref{thm:theta alpha iff rank geq alpha}, $\text{\ref{item:alpha equiv; thm:theta alpha iff alpha elem equiv iff alpha equiv}} \Lra \text{\ref{item:alpha bis; thm:theta alpha iff alpha elem equiv iff alpha equiv}}$ is by \cref{lem:alpha equiv iff alpha bis}, and $\text{\ref{item:alpha bis; thm:theta alpha iff alpha elem equiv iff alpha equiv}} \Ra \text{\ref{item:elem equiv; thm:theta alpha iff alpha elem equiv iff alpha equiv}}$ is by \cref{thm:alpha-bisimulation invariance}.

	$\text{\ref{item:elem equiv; thm:theta alpha iff alpha elem equiv iff alpha equiv}} \Ra \text{\ref{item:theta alpha; thm:theta alpha iff alpha elem equiv iff alpha equiv}}$. Note that $(N, \ol b,N, \ol b)$ is winning for Eloise in $\MEF(\mc N, \mc N)$, and hence has rank at least $\alpha$ by \ref{item:infty iff winning; lem:position rank basic results} of \cref{lem:position rank basic results}. Hence by \cref{thm:theta alpha iff rank geq alpha} we have:
	\begin{equation*}
		\mc N, N \vD \theta^{\mc N, N, \ol b}_{\alpha}\left[\ol b\right]
	\end{equation*}
	But $\theta^{\mc N, N, \ol b}_{\alpha}$ has modal-quantifier rank $\alpha$, whence by assumption:
	\begin{equation*}
		\mc M, M \vD \theta^{\mc N, N, \ol b}_{\alpha}[\ol a]\qedhere
	\end{equation*}
\end{proof}

Putting everything together, we can now prove the main result.

\begin{proof}[Proof of \cref{thm:MEF tfae}]
	We have:
	\begin{align*}
		M, \ol a \equiv^\dia_\infty N, \ol b
			&\quad\Lra\quad \forall \alpha \colon M, \ol a \equiv^{\dia,\alpha+1}_\infty N, \ol b \\
			&\quad\Lra\quad \forall \alpha \colon (\mc M, M, \ol a) \bis_{\alpha} (\mc N, N, \ol b) \tag{\cref{thm:theta alpha iff alpha elem equiv iff alpha equiv}} \\
			&\quad\Lra\quad \text{Eloise will win }\MEF(\mc M, M, \ol a, \mc N, N, \ol b) \tag{\cref{lem:position rank basic results}} \\
			&\quad\Lra\quad (\mc M, M, \ol a) \bis (\mc N, N, \ol b) \tag{\cref{thm:Eloise winning iff bis}}
	\end{align*}
\end{proof}

\section{\texorpdfstring{$\Mod(T)$}{Mod(T)} and set-sized potentialist systems}
\label{sec:mod T set}

The remainder of this article will consider following the question.

\begin{question}\label{q:pot sys bis with set}
	When is a potentialist system bitotally bisimilar to one consisting of set-many worlds?
\end{question}

In this section, I focus on those systems of the form $\Mod(T)$ for some first-order $L$-theory $T$, and ask the following specialisation of this question.

\begin{question}\label{q:when mod T bis to set}
	For which first-order theories $T$ is $\Mod(T)$ bitotally bisimilar a set-sized potentialist system?
\end{question}

In this section and the next I make use of basic model theoretic terminology and techniques, such as $\aleph_0$-saturated models and the method of diagrams. For a reference see \cite{HodgesWilfrid1993Mt}.

Let us first see that answer to \cref{q:when mod T bis to set} is not ``all of them'', by considering the theory of directed graphs. Indeed, this potentialist system has a very rich structure; see \cite{hamkins2020modal} for further details.

\begin{proposition}\label{prop:mod directed graphs does not reduce}
	Let $L = \{R\}$ and let $T$ be the $L$-theory of directed graphs. Then $\Mod(T)$ is not bitotally bisimilar with a set-sized system.
\end{proposition}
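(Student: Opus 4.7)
The plan is to exhibit a proper class of pairwise non-bisimilar pointed systems inside $\Mod(T)$. This will preclude any bitotal bisimulation with a set-sized potentialist system $\mc N$: such a bisimulation would pair each pointed system of $\Mod(T)$ with a bisimilar pointed system of $\mc N$, and $\mc N$ admits only set-many pointed systems, hence only set-many bisimulation classes.

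For each ordinal $\alpha$, take the directed graph $G_\alpha \defeq (\alpha, \in)$: the ordinal $\alpha$ with edge relation $\in$ (equivalently, $<$). Each $G_\alpha$ is a model of $T$, and I will show that the pointed systems $\{(\Mod(T), G_\alpha) : \alpha \in \On\}$ --- which form a proper class --- are pairwise non-bisimilar.

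By the Fundamental Theorem of Bisimulations (\cref{thm:bisimulation invariance}), it suffices to separate $G_\alpha$ from $G_\beta$ by an $L^\dia_\infty$-sentence, and in fact I will use only the non-modal fragment $L_\infty \sse L^\dia_\infty$. Define formulas $\phi_\gamma(x)$ by transfinite recursion:
\begin{equation*}
	\phi_\gamma(x) \defeq \bigwedge_{\delta < \gamma} \exists y \bigl( R(y, x) \wedge \phi_\delta(y) \bigr) \wedge \forall y \Bigl( R(y, x) \to \bigvee_{\delta < \gamma} \phi_\delta(y) \Bigr).
\end{equation*}
A routine induction on $\gamma$ shows that in $G_\beta$ the formula $\phi_\gamma$ is satisfied by $\eta < \beta$ iff $\eta = \gamma$, using that the $\in$-predecessors of $\eta$ in $G_\beta$ are exactly $\{\delta : \delta < \eta\}$ and form a well-ordering of type $\eta$. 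Consequently the sentence $\sigma_\gamma \defeq \exists x\, \phi_\gamma(x)$ holds in $G_\beta$ exactly when $\gamma < \beta$, so for $\alpha < \beta$ the sentence $\sigma_\alpha$ separates $G_\alpha$ from $G_\beta$.

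The only technical step is the induction characterising $\phi_\gamma$, which must be carried out across the base, successor and limit stages but poses no real difficulty. Beyond that, the proof is essentially a counting argument: a proper class of pairwise non-bisimilar pointed systems cannot all be accommodated inside the set-many bisimulation classes realised by $\mc N$.
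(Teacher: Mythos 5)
Your proof is correct and follows essentially the same route as the paper: a proper class of well-orders distinguished by the standard $L_\infty$-formulas defining ordinal rank, combined with the Fundamental Theorem of Bisimulations and a counting argument (the paper uses $(\alpha+1,\in)$ and pins down the top element with a sentence $\nu_\alpha$, whereas you use $(\alpha,\in)$ and the sentence $\exists x\,\phi_\alpha(x)$ --- an immaterial difference). The only quibble is that $G_0$ is the empty structure, which is typically excluded as a model; just start the family at $\alpha\geq 1$.
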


\begin{proof}
	Using the Fundamental Theorem of Bisimulations (\cref{thm:bisimulation invariance}), it suffices to show that there is a proper class of directed graphs which are pair-wise distinguishable using $L^\dia_\infty$-formulas. In fact we don't need to use any modalities for this.
	
	For every ordinal $\alpha$ the set $(\alpha+1,\in)$ is a model of $T$. The structure of each ordinal can be encoded in an $L_\infty$-sentence as follows. Define the formula $\xi_\alpha(x)$ inductively by:
	\begin{equation*}
		\xi_\alpha(x) \defeq \bigwedge_{\beta < \alpha}\left(\exists y (y R x \wedge \xi_\beta(y))\right) \wedge \forall y \left(y R x \ra\bigvee_{\beta < \alpha} \xi_\beta(y)\right)
	\end{equation*}
	It can be shown by induction that $(\alpha+1,\in) \vD \xi_\beta[\alpha]$ if and only if $\alpha=\beta$. Now let:
	\begin{equation*}
		\nu_\alpha \defeq \exists x(\forall y \neg x R y \wedge \xi_\alpha(x))
	\end{equation*}
	Since every structure $(\alpha+1,\in)$ has a unique top element $\alpha$, we then have that $(\alpha+1 ,\in) \vD \nu_\beta$ if and only if $\alpha=\beta$.
\end{proof}

To provide a positive answer to \cref{q:when mod T bis to set}, we seek a property of first-order theories which ensures that $\Mod(T)$ is not `unboundedly complicated'. The following notion provides a sufficient condition.

\begin{definition}
	Say that a theory $T$ is \emph{$\kappa$-rich} for some cardinal $\kappa \geq \abs L + \aleph_0$ if it has infinite models, and all its models of size $\kappa$ are $\aleph_0$-saturated.
\end{definition}

Let us first see a couple of basic results about this notion.

\begin{lemma}\label{lem:richness monotonic}
	If $T$ is $\kappa$-rich, and $\lam \geq \kappa$, then $T$ is also $\lam$-rich.
\end{lemma}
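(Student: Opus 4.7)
The plan is to reduce the $\lambda$-case to the $\kappa$-case via the Löwenheim-Skolem theorem. The condition that $T$ has infinite models is preserved trivially, so the entire content of the lemma is that saturation at size $\kappa$ propagates upwards.

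First I would take an arbitrary model $M \vD T$ with $\abs{M} = \lambda$, a finite tuple $\ol a$ from $M$, and a complete $1$-type $p(x)$ over $\ol a$ consistent with $\Th(M,\ol a)$; the goal is to realize $p$ in $M$. By the downward Löwenheim-Skolem-Tarski theorem (using $\kappa \geq \abs L + \aleph_0$ and $\lambda \geq \kappa$), I can find an elementary substructure $M_0 \preceq M$ of size exactly $\kappa$ with $\ol a \in M_0$. By $\kappa$-richness of $T$, the model $M_0$ is $\aleph_0$-saturated.

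Since $M_0 \preceq M$, we have $\Th(M_0, \ol a) = \Th(M, \ol a)$, so $p$ is also a consistent type over $\ol a$ in $M_0$. By $\aleph_0$-saturation of $M_0$, some $c \in M_0$ realizes $p$. By elementarity (and because the formulas in $p$ have parameters only from $\ol a \subseteq M_0$), the element $c \in M$ realizes $p$ in $M$ as well. This shows every model of $T$ of size $\lambda$ is $\aleph_0$-saturated, so $T$ is $\lambda$-rich.

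The argument is essentially routine; the only subtle point is making sure the intermediate substructure has size \emph{exactly} $\kappa$ (not merely at most $\kappa$), which is handled by the standard form of the Löwenheim-Skolem-Tarski theorem using $\lambda \geq \kappa \geq \abs L + \aleph_0$. No genuine obstacle arises.
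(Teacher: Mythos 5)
Your proof is correct and follows essentially the same route as the paper's: apply the Downward L\"owenheim-Skolem Theorem to find an elementary substructure of size exactly $\kappa$ containing the finite parameter set, realise the type there using $\kappa$-richness, and transfer the realisation back up by elementarity. The extra care you take about the substructure having size exactly $\kappa$ is a reasonable point to flag, though the paper treats it as implicit in the standard statement of the theorem.
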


\begin{proof}
	Indeed, let $M \vD T$ be a $\lam$-model. Take $S \sse M$ finite. By the Downward Löwenheim-Skolem Theorem, there is an elementary substructure $N \preceq M$ of size $\kappa$, containing $S$, which is a model of $T$. Since $T$ is $\kappa$-rich, every type over $S$ is realised in $N$, and hence too in $M$.
\end{proof}

\begin{lemma}\label{lem:every completion kappa cat ra kappa rich}
	Let $T$ be such that every completion is $\kappa$-categorical. Then $T$ is $\kappa$-rich.
\end{lemma}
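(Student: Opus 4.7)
The plan is to reduce both clauses in the definition of $\kappa$-richness to a single classical model-theoretic input: if $T'$ is a complete $L$-theory that is $\kappa$-categorical for some $\kappa \geq \abs L + \aleph_0$, then its (unique, up to isomorphism) $\kappa$-sized model is $\aleph_0$-saturated. This is standard and I would simply invoke it from \cite{HodgesWilfrid1993Mt}.

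With that input, first I would verify that $T$ has infinite models: any consistent theory admits a completion via Zorn's lemma, so $T$ has at least one completion $T'$, which is $\kappa$-categorical by hypothesis and hence possesses a model of size $\kappa \geq \aleph_0$; this is an infinite model of $T$. For the saturation clause I would take an arbitrary $M \vD T$ with $\abs M = \kappa$ and set $T' \defeq \Th(M)$. Since $T' \supseteq T$ is complete and consistent it is a completion of $T$, hence $\kappa$-categorical by the hypothesis of the lemma, and up to isomorphism $M$ is its unique $\kappa$-sized model. The cited classical fact then directly yields that $M$ is $\aleph_0$-saturated, establishing the second clause of $\kappa$-richness for an arbitrary $\kappa$-sized $M \vD T$.

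There is no real obstacle beyond citing the classical input — the remainder is routine bookkeeping. For context, the classical input splits by case in its proof: for $\kappa = \aleph_0$ (so $L$ is countable) it is part of the Ryll--Nardzewski theorem, which forces each Stone space $S_n(T')$ to be finite, whence every complete $1$-type over a finite parameter set is realised in $M$; for uncountable $\kappa$ it follows from Morley's categoricity theorem, which delivers $\omega$-stability and hence $\aleph_0$-saturated (in fact saturated) models of every size $\geq \abs L + \aleph_0$, forcing $M$ by categoricity to coincide with such a model.
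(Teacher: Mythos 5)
Your proof is correct and follows essentially the same route as the paper, which simply observes that when every completion is $\kappa$-categorical, every $\kappa$-sized model is saturated (hence $\aleph_0$-saturated). You spell out the reduction to $\Th(M)$ and the classical Ryll--Nardzewski/Morley input more explicitly, but the underlying argument is the same.
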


\begin{proof}
	If all completions are $\kappa$-categorical, then all $\kappa$-models are saturated, so in particular $\aleph_0$-saturated.
\end{proof}

Note that the converse doesn't hold for $\kappa > \abs L$. For example, if $T$ is $\abs{L}$-categorical then it is $\abs{L}^+$-rich, but it need not be $\abs{L}^+$-categorical.

We will see that when $T$ is $\kappa$-rich $\Mod(T)$ is bisimilar with a certain set-sized system. This system is defined by removing all the worlds from $\Mod(T)$ of size greater than $\kappa$, as follows.

\begin{definition}
	Let $\lam$ be a cardinal. Define $\Mod_\lam(T)$ to be the subsystem of $\Mod(T)$ consisting of those models of cardinality less than $\lam$. The system $\ModEm_\lam(T)$ is defined similarly.
\end{definition}

\begin{lemma}
	The renaming bisimulation $\propto \colon \Mod(T) \bis \ModEm(T)$ restricts to a bitotal iso-bisimulation:
	\begin{equation*}
		\Mod_\lam(T) \bis \ModEm_\lam(T)
	\end{equation*}
\end{lemma}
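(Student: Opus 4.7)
The plan is to exploit the fact that the composite iso-bisimulation $\Mod(T) \bis \ModEm(T)$ built up in the proof of \cref{thm:Mod T Mode T bis} always relates worlds of equal cardinality. Indeed, each link in the chain $\Mod(T) \bis \Mod(T)^\bullet \bis \Unrav(\ModEm(T)^\bullet) \bis \ModEm(T)^\bullet \bis \ModEm(T)$ is witnessed by an honest isomorphism: disjointification uses the tagging bijection $a \mapsto (a,M)$, unravelling uses the identity, and the renaming bisimulation uses $\Rename_{\vec\pi} \colon \Flat(\vec\pi) \to M_{\vec\pi}$, which is by construction a renaming. Consequently, whenever $M \propto N$ along any of these iso-bisimulations, and hence along the composite, we have $\abs{M} = \abs{N}$.

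First I would define the restricted relation $\propto'$ on $\Mod_\lam(T) \times \ModEm_\lam(T)$ by taking those pairs $(M,N)$ with $M \propto N$ (automatically both sides have size $<\lam$), together with the original accompanying isomorphisms. Next I would check the iso-bisimulation extension property: suppose $M \propto' N$ and take an arrow $\iota \colon M \hookrightarrow M'$ in $\Mod_\lam(T)$. Since $M'$ is a world of the restricted system we have $\abs{M'} < \lam$. Applying the iso-bisimulation property of $\propto$ in the full setting yields $\rho \colon N \to N'$ in $\ModEm(T)$ with $M' \propto N'$ and the corresponding square commuting. Cardinality preservation forces $\abs{N'} = \abs{M'} < \lam$, so $N' \in \ModEm_\lam(T)$ and $\rho$ is an arrow of $\ModEm_\lam(T)$. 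The symmetric direction, starting from an arbitrary embedding $\rho \colon N \to N'$ in $\ModEm_\lam(T)$, is verified identically.

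Finally, bitotality of $\propto'$ follows at once from bitotality of $\propto$ together with cardinality preservation: any $M \in \Mod_\lam(T)$ is related by $\propto$ to some $N \in \ModEm(T)$ with $\abs{N} = \abs{M} < \lam$, hence $N \in \ModEm_\lam(T)$; and symmetrically on the right.

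I don't expect any serious obstacle: the entire argument is bookkeeping around the observation that each component of the renaming bisimulation is cardinality-preserving. The only subtlety is interpretational, namely that \emph{the renaming bisimulation $\propto \colon \Mod(T) \bis \ModEm(T)$} refers to the composite iso-bisimulation assembled in the proof of \cref{thm:Mod T Mode T bis} rather than to the named piece between $\Mod(T)^\bullet$ and $\Unrav(\ModEm(T)^\bullet)$ alone.
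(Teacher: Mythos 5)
Your proof is correct and follows the evidently intended route: the paper states this lemma without proof, and the only real content is precisely your observation that every link in the composite iso-bisimulation is witnessed by a genuine isomorphism and hence preserves cardinality, so the restricted relation, the extension property, and bitotality all pass to the size-${<}\lam$ subsystems. Your closing interpretational remark --- that $\propto$ here denotes the composite $\Mod(T) \bis \Mod(T)^\bullet \bis \Unrav(\ModEm(T)^\bullet) \bis \ModEm(T)^\bullet \bis \ModEm(T)$ rather than the named middle piece alone --- is also the right reading.
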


Finally, potentialist systems such as $\Mod(T)$ and $\Mod_\lam(T)$ contain class-many isomorphic copies of each model. Since we are interested in reducing the size of such systems, it will be necessary to consider the equivalent system containing only one model for each isomorphism type.

\begin{definition}
	Let $\mc M$ be a potentialist system. Its \emph{skeleton}, $\Skel(\mc M)$, is the skeleton in the category-theoretic sense, i.e\@. the quotient under the relation which identifies $M$ and $M'$ if there are mutually inverse embeddings between them in $\mc M$. Concretely, for each equivalence class, we pick one representative, and let $\Skel(\mc M)$ be the subsystem of $\mc M$ consisting of these worlds and all embeddings between them.
\end{definition}

\begin{remark}
	Giving $\Skel(\mc M)$ a concrete realisation in general requires some class-level choice, like the Axiom of Global Choice. This can be avoided here, since we will only consider $\Skel(\Mod_\lam(T))$ for some $T$, which can be realised by considering a set models of $T$ whose domain is a subset of some fixed set of size $\lam$.
\end{remark}

\begin{lemma}\label{lem:skel iso bis}
	The relation between $\mc M$ and $\Skel(\mc M)$ which associates each structure to its equivalence class is a bitotal iso-bisimulation.
\end{lemma}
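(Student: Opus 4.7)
The plan is to unpack the definition of iso-bisimulation directly and verify each clause in turn. First I would normalise the setup: for each $M \in \mc M$ let $\wh M \in \Skel(\mc M)$ denote the chosen representative of its equivalence class, and fix an isomorphism $\xi_M \colon M \to \wh M$ (such a $\xi_M$ and an inverse $\xi_M^{-1}$ exist in $\mc M$ by the very definition of the equivalence). For $N \in \Skel(\mc M)$, take $\wh N = N$ and $\xi_N = \id_N$, so the scheme of chosen isomorphisms is coherent. Define the relation by $M \approx N$ iff $N = \wh M$, and set $\xi_{M,N} \defeq \xi_M$.

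Bitotality is then immediate: every $M \in \mc M$ is related to $\wh M$, and every $N \in \Skel(\mc M)$ is related to itself. The substantive work is checking the commutative-square condition.

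For the forward direction, suppose $M \approx \wh M$ and $\pi \colon M \to M'$ is an arrow in $\mc M$. Put $N' = \wh{M'}$, so $M' \approx N'$, and define $\rho \defeq \xi_{M'} \circ \pi \circ \xi_M^{-1} \colon \wh M \to \wh{M'}$. This is a composition of arrows of $\mc M$, hence in $\mc M$; and since its domain and codomain lie in $\Skel(\mc M)$, it belongs to $\Skel(\mc M)$ by definition of the latter as a full subsystem. The required square commutes by construction. The converse direction is even easier: given $\rho \colon \wh M \to N'$ in $\Skel(\mc M)$, take $M' \defeq N'$ and $\pi \defeq \rho \circ \xi_M$. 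Then $\pi \in \mc M$, and by our normalisation $\wh{M'} = N'$ with $\xi_{M'} = \id_{N'}$, so the square commutes trivially.

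There is no real obstacle — the only subtlety is the tacit global-choice appeal required to pick representatives and isomorphisms $\xi_M$, which is precisely the point of the remark preceding the lemma (and, as noted there, is avoidable in the specific application to $\Skel(\Mod_\lam(T))$).
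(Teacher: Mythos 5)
Your proof is correct: the paper states this lemma without proof, treating it as routine, and your direct verification (choosing $\xi_M\colon M\to\wh M$ from the mutually inverse embeddings witnessing the equivalence, conjugating $\pi$ to $\rho=\xi_{M'}\circ\pi\circ\xi_M^{-1}$, and noting that $\Skel(\mc M)$ is full on the representatives) is exactly the intended argument. The normalisation $\xi_N=\id_N$ for representatives and the remark on global choice are both handled appropriately.
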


With the preliminaries in place, the main theorem for this section is ready to be proved.

\begin{theorem}\label{thm:kappa rich ra mod T bis mod kappa T}
	If $T$ is $\kappa$-rich then $\Mod(T)$ is bitotally bisimilar with $\Mod_{\kappa^+}(T)$, and thus with a set-sized potentialist system. 
\end{theorem}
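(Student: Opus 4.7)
The plan is first to use \cref{thm:Mod T Mode T bis} (and the observation that the renaming bisimulation restricts to models of size $< \kappa^+$) to reduce the problem to producing a bitotal bisimulation between $\Mode(T)$ and $\Mode_{\kappa^+}(T)$. Bisimulations compose, so this reduction is sound.

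The core bisimulation $\sim$ I would use relates $(M, \ol a) \in \Mode(T)$ with $(N, \ol b) \in \Mode_{\kappa^+}(T)$ whenever both $M$ and $N$ are $\aleph_0$-saturated, $\abs N = \kappa$, and the complete types agree, $\Tp^M(\ol a) = \Tp^N(\ol b)$. Observe that by $\kappa$-richness applied to the completion $\Th(N)$, any model of size $\kappa$ is automatically $\aleph_0$-saturated, so the saturation hypothesis on $N$ comes for free. Condition \ref{item:partial isomorphism; def:bisimulation} is immediate from type agreement, and \ref{item:isomorphism extension; def:bisimulation} follows from $\aleph_0$-saturation on both sides realising every consistent $1$-type over the parameter tuple.

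The crux is \ref{item:back and forth; def:bisimulation}. Given $\pi \colon M \to M'$ in $\Mode(T)$, set $p = \Tp^{M'}(\pi(\ol a))$ and consider the theory
\begin{equation*}
    \Th(M') \cup \Diag(N) \cup \{\phi(\ol b) : \phi \in p\}.
\end{equation*}
I expect consistency to follow by compactness: a finite fragment mentions only finitely many elements $\ol b \concat \ol f$ of $N$, with atomic diagram $\Delta_0(\ol b, \ol f)$; the existential closure $\exists \ol y\, \Delta_0(\ol b, \ol y)$ lies in $\Tp^N(\ol b) = \Tp^M(\ol a)$, so it is witnessed in $M$ by some $\ol g$, and then $\pi(\ol g)$ together with the finite fragment of $p$ is satisfied in $M'$. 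Löwenheim–Skolem produces a model $N' \vDash \Th(M')$ of size exactly $\kappa$ (hence $\aleph_0$-saturated by $\kappa$-richness), with an embedding $\rho \colon N \to N'$ coming from $\Diag(N)$ and $\rho(\ol b)$ realising $p$; this is precisely the required $\sim$-related successor. The symmetric direction is analogous.

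The main obstacle is ensuring bitotality despite the fact that a generic $(M, \ol a) \in \Mode(T)$ need not have $M$ itself $\aleph_0$-saturated, which would break \ref{item:isomorphism extension; def:bisimulation} in the direction $d \in N \to c \in M$. To handle this I would enlarge the bisimulation so that, in addition to the ``saturated–saturated'' pairs above, each $(M, \ol a) \in \Mode(T)$ with $M$ not $\aleph_0$-saturated is also related to a chosen $(N, \ol b)$, using the fact (\cref{lem:richness monotonic}) that $T$ is $\lam$-rich for every $\lam \geq \kappa$, so an $\aleph_0$-saturated elementary extension $M \preceq M^*$ of size $\max(\abs M, \kappa)$ exists. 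The delicate part is verifying \ref{item:isomorphism extension; def:bisimulation} and \ref{item:back and forth; def:bisimulation} at such ``mixed'' pairs; one handles this by choosing $N$ to realise only types already realised by $M$ together with forthcoming moves, or equivalently by exploiting the fact that any request from the $N$-side can be absorbed via a modal move that first extends $M$ inside $\Mode(T)$, and then verifying that the resulting collection of related pairs is closed under the bisimulation clauses.
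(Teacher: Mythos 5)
Your overall architecture — reduce to $\ModEm(T) \bis \ModEm_{\kappa^+}(T)$ via the renaming bisimulation, relate large models by agreement of complete types with a $\kappa$-sized model, use $\aleph_0$-saturation for \ref{item:isomorphism extension; def:bisimulation}, and use a compactness/diagram argument for \ref{item:back and forth; def:bisimulation} — is exactly the paper's argument for its ``large'' case, and your consistency check for $\Th(M') \cup \Diag(N) \cup p(\ol b)$ is correct. The gap is in your treatment of worlds that are not $\aleph_0$-saturated. First, you miss that by your own appeal to \cref{lem:richness monotonic} the only such worlds have cardinality strictly less than $\kappa$: every model of size $\geq \kappa$ is automatically $\aleph_0$-saturated. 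Those small worlds are (up to renaming) already worlds of $\Mod_{\kappa^+}(T)$, so the paper simply relates $(M,\ol a)$ to $(N,\ol b)$ by \emph{isomorphism} when $\abs M \leq \kappa$, which makes \ref{item:partial isomorphism; def:bisimulation} and \ref{item:isomorphism extension; def:bisimulation} trivial and reduces \ref{item:back and forth; def:bisimulation} to a single Downward L\"owenheim--Skolem step when an embedding lands in an uncountably large world. No saturated elementary extension $M^*$ is needed anywhere.

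Second, the patches you sketch for the ``mixed'' pairs do not work as stated. If $M$ is not $\aleph_0$-saturated and $N$ is $\aleph_0$-saturated of size $\kappa$ with $\Tp^M(\ol a)=\Tp^N(\ol b)$, then $N$ realises a type over $\ol b$ that $M$ omits over $\ol a$, so the ``vice versa'' direction of \ref{item:isomorphism extension; def:bisimulation} genuinely fails; it cannot be ``absorbed via a modal move'', since \ref{item:isomorphism extension; def:bisimulation} demands a witness $c$ in $M$ itself, not in an extension of $M$. The alternative of ``choosing $N$ to realise only types already realised by $M$'' is, once made precise, just the isomorphic-copy solution — but you never verify it, and you also leave right-totality unaddressed: worlds $N$ of size $<\kappa$ on the right need not be $\aleph_0$-saturated either, so they are not covered by your core relation and need the same isomorphism clause. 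With the case split $\abs M \leq \kappa$ (relate by isomorphism) versus $\abs M > \kappa$ (relate by type agreement to a $\kappa$-sized model), your argument closes up and coincides with the paper's.
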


\begin{remark}
	Note that $\Mod(T)$ is not \emph{iso}-bisimilar with a set-sized system, for cardinality reasons.
\end{remark}

\begin{proof}[Proof of \cref{thm:kappa rich ra mod T bis mod kappa T}]
	Define $\sim$ between $\ModEm(T)$ and $\ModEm_{\kappa^+}(T)$ as follows. Take $(M, \ol a)$ from $\ModEm(T)$ and $(N, \ol b)$ from $\ModEm_{\kappa^+}(T)$. There are two cases.
	\begin{itemize}
		\item \textbf{Case 1: $\abs M \leq \kappa$}. Put $(M, \ol a) \sim (N, \ol b)$ if and only if $M, \ol a \cong N, \ol b$.
		\item \textbf{Case 2: $\abs M > \kappa$}. Put $(M, \ol a) \sim (N, \ol b)$ if and only if $\abs N = \kappa$ and $\ol a$ and $\ol b$ have the same $L$-types: $\Tp_M(\ol a) = \Tp_N(\ol b)$. For each $(M, \ol a)$, such a $(N, \ol b)$ must exist in $\ModEm_{\kappa^+}(T)$ by the Downward Löwenheim-Skolem Theorem.
	\end{itemize}
	Note that $\sim$ is bitotal. Let us verify that it is a bisimulation. Assume that $(M, \ol a) \sim (N, \ol b)$. First note that, by definition, $M, \ol a \equiv N, \ol b$. Let us now consider condition \ref{item:isomorphism extension; def:bisimulation} on bisimulations. There are two cases.
	
	\textbf{Condition \ref{item:isomorphism extension; def:bisimulation}, case 1: $\abs M \leq \kappa$}. Then $M, \ol a \cong N, \ol b$. Hence for each $c \in M$, there is $d \in N$ such that $(M, \ol a \concat c) \sim (N, \ol b \concat d)$, and conversely.

	\textbf{Condition \ref{item:isomorphism extension; def:bisimulation}, case 2: $\abs M > \kappa$}. Then $\Tp_M(\ol a) = \Tp_N(\ol b)$. By assumption on $T$ and \cref{lem:richness monotonic}, both $M$ and $N$ are $\aleph_0$-saturated. Hence for any $c \in M$ there is $d \in N$ such that $\Tp_N(\ol b \concat d) = \Tp_M(\ol a \concat c)$, and conversely.

	For condition \ref{item:back and forth; def:bisimulation}, there are again two cases. 

	\textbf{Condition \ref{item:back and forth; def:bisimulation}, case 1: $\abs M \leq \kappa$}. Then by definition there is an isomorphism $f \colon (M, \ol a) \to (N, \ol b)$. Assume that there is $\pi \colon M \to M'$ in $\ModEm(T)$. If $\abs{M'} \leq \kappa$, then by definition there is $\rho \colon N \to N'$ in $\ModEm_{\kappa^+}(T)$ such that $f$ extends to an isomorphism $M' \cong N'$. If, on the other hand, $\abs{M'} > \kappa$, then by the Downward Löwenheim-Skolem Theorem, there is $N' \preceq M'$ of cardinality $\kappa$ with $\pi(M) \sse N'$; note that $N' \in \ModEm_{\kappa^+}(T)$. Then we get an embedding $\rho \defeq \pi \circ f^{-1} \colon N \to N'$, with the property that:
	\begin{equation*}
		\Tp_{N'}(\rho(\ol b)) = \Tp_{N'}(\pi(\ol a)) = \Tp_{M'}(\pi(\ol a))
	\end{equation*}
	Whence $(M', \pi(\ol a)) \sim (N', \rho(\ol b))$. See \cref{fig:com diag for B3 case 1; proof:kappa rich ra mod T bis mod kappa T} for a commutative diagram representing the situation. Conversely, any morphism $\rho \colon N \to N'$ in $\ModEm_{\kappa^+}(T)$ immediately gives a morphism $\pi \colon M \to M'$, which is the same up to isomorphism.

	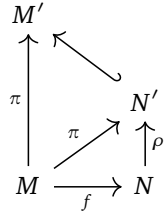
\begin{figure}[b]
		\begin{equation*}
			\begin{tikzcd}
			M'                                                    &                      \\
			                                                      & N' \arrow[lu, hook'] \\
			M \arrow[r, "f"'] \arrow[ru, "\pi"] \arrow[uu, "\pi"] & N \arrow[u, "\rho"']
			\end{tikzcd}
		\end{equation*}
		\caption{The commutative diagram for \ref{item:back and forth; def:bisimulation}, case 1}
		\label{fig:com diag for B3 case 1; proof:kappa rich ra mod T bis mod kappa T}
	\end{figure}

	\textbf{Condition \ref{item:back and forth; def:bisimulation}, case 2: $\abs M > \kappa$}. Take $\pi \colon M \to M'$ in $\ModEm(T)$. By a compactness argument, we can see that that the following is satisfiable (substituting the constants for $\ol b$ from $\Diag(N)$ for the free variables in $\Tp_{M'}(\pi(\ol a))$).
	\begin{equation*}
		T \cup \Diag(N) \cup \Tp_{M'}(\pi(\ol a))(\ol b)
	\end{equation*}
	Therefore, by the method of diagrams, there is a structure $N' \vD T$ of cardinality $\kappa$ with and an embedding $\rho \colon N \to N'$ such that:
	\begin{equation*}
		\Tp_{N'}(\rho(\ol b))=\Tp_{M'}(\pi(\ol a))
	\end{equation*}
	Whence $(M', \pi(\ol a)) \sim (N', \rho(\ol b))$. Conversely, given any morphism $\rho \colon N \to N'$ in $\ModEm_{\kappa^+}(T)$ we can find a corresponding morphism $\pi \colon M \to M'$ by a very similar argument.

	Finally, by \cref{thm:renaming bis total bis} and \cref{lem:skel iso bis}, we have bitotal bisimulations:
	\begin{align*}
		\Mod(T) 
			&\bis \ModEm(T) \\
			&\bis \ModEm_{\kappa^+}(T) \\
			&\bis \Mod_{\kappa^+}(T) \\
			&\bis \Skel(\Mod_{\kappa^+}(T))
	\end{align*}
	the latter of which consists of set-many models.
\end{proof}

\begin{corollary}\label{cor:kappa-cat bis to topped system}
	Let $T$ be $\kappa$-categorical. Then $\Mod(T)$ is bitotally bisimilar with a system consisting of models of size ${<}\kappa$ and a single $\kappa$-model.
\end{corollary}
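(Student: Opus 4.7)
The plan is to reduce directly to \cref{thm:kappa rich ra mod T bis mod kappa T} via the richness condition, and then use $\kappa$-categoricity to cut the skeleton down to the exact shape asked for. First, I would verify that $T$ is $\kappa$-rich. By \cref{lem:every completion kappa cat ra kappa rich}, it suffices to show that every completion of $T$ is $\kappa$-categorical. But if $T'$ is any completion of $T$ and $M, N \vD T'$ both have cardinality $\kappa$, then $M,N \vD T$, so $M \cong N$ by hypothesis; the case where $T'$ has no $\kappa$-model is vacuous. Hence $T$ is $\kappa$-rich (assuming, as usual, that the $\kappa$-categorical $T$ has infinite models; if not, the statement is trivial).

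Next, I would invoke \cref{thm:kappa rich ra mod T bis mod kappa T} to obtain bitotal bisimulations
\begin{equation*}
	\Mod(T) \bis \Mod_{\kappa^+}(T) \bis \Skel(\Mod_{\kappa^+}(T)),
\end{equation*}
the second via \cref{lem:skel iso bis}. The system $\Skel(\Mod_{\kappa^+}(T))$ is a set-sized potentialist system consisting of one representative from each isomorphism class of models of $T$ of cardinality at most $\kappa$.

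Finally, I would use $\kappa$-categoricity to describe the $\kappa$-sized part of this skeleton. By $\kappa$-categoricity, any two models of $T$ of size exactly $\kappa$ are isomorphic, so there is precisely one isomorphism class of $\kappa$-sized models; picking its unique representative gives a single $\kappa$-sized world, while the remaining worlds have size ${<}\kappa$. This yields a bitotal bisimulation between $\Mod(T)$ and the described system.

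There is no real obstacle beyond the bookkeeping above; the only subtlety is the standing conventions about $\kappa$-categorical theories possibly having no $\kappa$-model or no infinite model, but in both degenerate situations the conclusion of the corollary is immediate.
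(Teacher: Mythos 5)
Your proof is correct and follows essentially the same route as the paper: the paper's own argument is simply to apply \cref{thm:kappa rich ra mod T bis mod kappa T} and observe that $\Skel(\Mod_{\kappa^+}(T))$ contains a single $\kappa$-sized model. You additionally spell out the step the paper leaves implicit, namely that $\kappa$-categoricity of $T$ passes to every completion and hence yields $\kappa$-richness via \cref{lem:every completion kappa cat ra kappa rich}, which is a worthwhile bit of bookkeeping but not a different approach.
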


\begin{proof}
	Note that $\Skel(\Mod_{\kappa^+}(T))$ contains a single $\kappa$-size model.
\end{proof}

\section{\texorpdfstring{$\aleph_0$}{Omega}-categorical theories}
\label{sec:omega categorical}

In this section, I investigate \cref{q:pot sys bis with set} from a different direction. Given a potentialist system $\mc M$, we can consider its \emph{theory} $\Th(\mc M)$: the set of (finitary and non-modal) $L$-sentences which hold in all of its worlds. I first show that when $\Th(\mc M)$ is $\aleph_0$-categorical and has quantifier elimination, then $\mc M$, irrespective its modal structure, is bisimilar to a very small potentialist system. On the other hand, I give an example showing that the result no longer holds when we drop the condition `has quantifier elimination'.

\begin{theorem}\label{thm:count-cat qe}
	Let $\mc M$ be a potentialist system which contains an infinite world, and let $T = \Th(\mc M)$. Assume that $T$ is $\aleph_0$-categorical and has quantifier elimination. Then $\mc M$ is bitotally bisimilar to a system which contains exactly one infinite world, which is countable, and which has no self-embeddings except the identity.

	In particular, when $T$ has no finite models, $\mc M$ is bitotally bisimilar to a singleton system consisting of a countable world with just the identity embedding.
\end{theorem}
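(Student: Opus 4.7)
The plan is to build a target system $\mc N$ explicitly and exhibit a bitotal bisimulation $\mc M \bis \mc N$. Since $T$ is $\aleph_0$-categorical and has infinite models (as $\mc M$ contains an infinite world, and by the downward Löwenheim-Skolem theorem), there is a unique (up to isomorphism) countable model $M^*$ of $T$. I take $\mc N$ to have as worlds $M^*$ together with all finite worlds of $\mc M$, and as embeddings: the identities; all embeddings of $\mc M$ between finite worlds; every $L$-embedding $M \to M^*$ for each finite $M$ that admits at least one embedding (in $\mc M$) into some infinite world of $\mc M$; and nothing else. In particular, the only self-embedding of $M^*$ in $\mc N$ is the identity. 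Closure under composition follows because if $g \colon M_1 \to M_2$ is an embedding in $\mc M$ between finite worlds and $M_2$ embeds into an infinite world of $\mc M$, then so does $M_1$ by composition.

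The candidate bisimulation $\sim$ is defined by: $(M, \ol a) \sim (N, \ol b)$ iff either $M$ is finite, $N = M$ (as a world of $\mc N$), and $\ol a = \ol b$, or $M$ is infinite, $N = M^*$, and $\Tp_M(\ol a) = \Tp_{M^*}(\ol b)$. Under quantifier elimination complete types coincide with atomic types, which makes condition \ref{item:partial isomorphism; def:bisimulation} immediate. For \ref{item:isomorphism extension; def:bisimulation}, the finite case is trivial (take $d = c$), and the infinite case uses the saturation of $M^*$ on one side together with the fact that every model of an $\aleph_0$-categorical complete theory is atomic (by Ryll–Nardzewski) on the other, so that every complete type is realised in any infinite model of $T$. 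The main technical step is \ref{item:back and forth; def:bisimulation}, and the delicate point is the two-sided calibration of the finite-to-$M^*$ embeddings in $\mc N$. Forwards, an embedding $\pi \colon M \to M'$ in $\mc M$ with $M$ finite and $M'$ infinite is mirrored by any $L$-embedding $\rho \colon M \to M^*$ (which lies in $\mc N$ precisely because $\pi$ witnesses its admissibility), and quantifier elimination yields $\Tp_{M'}(\pi(\ol a)) = \Tp_{M^*}(\rho(\ol a))$; conversely, every $\rho \colon M \to M^*$ in $\mc N$ is, by construction, witnessed by some $\pi \colon M \to M'$ in $\mc M$ with $M'$ infinite, which supplies the required mirror. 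The purely infinite case is immediate since the only self-embedding of $M^*$ is the identity.

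Bitotality then follows: every finite world of $\mc M$ is related to its own copy in $\mc N$, and every infinite world is related to $M^*$ via a tuple realising the correct type (obtained from atomicity and saturation). The main obstacle throughout is the aforementioned calibration, ensuring that the admissible finite-to-$M^*$ embeddings in $\mc N$ are exactly those that admit corresponding mirrors in $\mc M$ — if one adds too many, the converse of \ref{item:back and forth; def:bisimulation} fails, while too few breaks the forward direction. The second half of the theorem falls out immediately: when $T$ has no finite models, $\mc M$ contains only infinite worlds, so $\mc N$ reduces to the singleton $\{M^*\}$ equipped solely with the identity embedding.
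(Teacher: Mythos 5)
Your proof is correct and follows essentially the same route as the paper: the same target system (the finite worlds of $\mc M$ plus a single countable model, with finite-to-countable embeddings calibrated by the existence of embeddings into infinite worlds of $\mc M$), the same relation (identity on finite worlds, type-matching into the countable model), and the same use of Ryll--Nardzewski plus quantifier elimination for conditions \ref{item:isomorphism extension; def:bisimulation} and \ref{item:back and forth; def:bisimulation}. The only (harmless) deviations are that you admit \emph{all} $L$-embeddings $M \to M^*$ for admissible finite $M$ rather than one realisation per embedding of $\mc M$, and that you phrase the relation via complete types where the paper uses atomic types --- equivalent under quantifier elimination.
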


\begin{remark}
	In Theorem~16 of \cite{hamkins2020modal} it is shown that if a theory $T$ admits quantifier elimination, then it admits `modality elimination' over $\Mod(T)$: every $\dia\phi \in L^\dia$ is equivalent in $\Mod(T)$ to $\phi$. When $T$ is $\aleph_0$-categorical, this can be seen as a special case of our \cref{thm:count-cat qe}, using the Fundamental Theorem of Bisimulations (\cref{thm:bisimulation invariance}). Indeed, if $M$ is a world in a potentialist system which sees only the identity embedding, then for every $\phi \in L^\dia_\infty$ and $\ol a$ in $M$ we have that $\mc M, M \vD \phi[\ol a]$ if and only if $\mc M, M \vD \dia\phi[\ol a]$. 
\end{remark}

\begin{proof}[Proof of \cref{thm:count-cat qe}]
	Define the system $\mc N$ as follows. Start with all the finite worlds of $\mc M$ together with the embeddings between them. Let $Q$ be a countable model of $T$. Add $Q$ to $\mc N$, together with the identity embedding. Furthermore, for every finite world $M$ in $\mc M$ and embedding $\pi \colon M \to M'$ into an infinite world, by the Downward Lowenheim-Skolem Theorem we can realise $\pi$ as an embedding $\pi^* \colon M \to Q$. Add $\pi^*$ into $\mc N$. Note that multiple embeddings may result in the same starred versions which are added to $\mc N$.

	Now define the relation $\sim$ between $\mc M$ and $\mc N$ from two cases (c.f.\@ the proof of \cref{thm:kappa rich ra mod T bis mod kappa T}).
	\begin{itemize}
		\item \textbf{Case 1: $M$ is finite}. Relate $(M, \ol a)$ and $(N, \ol b)$ if and only if they are equal.
		\item \textbf{Case 2: $M$ is infinite}. Relate $(M, \ol a)$ and $(N, \ol b)$ if and only if $N=Q$ and $\ol a$ and $\ol b$ agree on all atomic formulas: $\Delta_0^M(\ol a) = \Delta_0^N(\ol b)$.
	\end{itemize}
	It is not hard to see using the Downward Lowenheim-Skolem Theorem that this is a bitotal relation. Let us verify that it is a bisimulation. Assume that $(M, \ol a) \sim (N, \ol b)$.
	\begin{itemize}
		\item[\ref{item:partial isomorphism; def:bisimulation}] \label{item:partial isomorphism; proof:qe principal types modality elimination} 
			The first condition follows by definition.

		\item[\ref{item:isomorphism extension; def:bisimulation}] \label{item:isomorphism extension; proof:qe principal types modality elimination}
			When $M$ is finite this condition is immediate, so assume that $M$ is infinite. By the Ryll-Nardzewski Theorem applied to $\Th(M)$, the type of every tuple is principal. Moreover, since $T$ has quantifier elimination, the complete types of $\ol a$ and $\ol b$ are the same. Hence any extension of one can be mirrored in the other.

		\item[\ref{item:back and forth; def:bisimulation}] \label{item:back and forth; proof:qe principal types modality elimination} 
			There are several cases to consider. If $M$ is infinite, then since any embedding preserves atomic types, any modal extension on one side can be mirrored on the other. So we can assume that $M$ is finite (and hence $N = M$). Now any embedding from a finite world to a finite world on on side can be identically mirrored on the other, since $\mc N$ includes all embeddings from $\mc M$ between finite worlds. So we are left with two cases.
			\begin{enumerate*}[label=(\roman*)]
				\item If we have $\pi \colon M \to M'$ in $\mc M$ with $M'$ infinite, then as above we can realise this as $\pi^* \colon M \to Q$. Note that the atomic types of $\pi(\ol a)$ and $\pi^*(\ol b)$ are the same.
				\item If we have $\pi^* \colon N \to Q$ in $\mc N$, then by definition there is $\pi \colon M \to M'$ in $\mc M$ from which $\pi^*$ was obtained. Again the atomic types of $\pi(\ol a)$ and $\pi^*(\ol b)$ are the same.\qedhere
			\end{enumerate*}
	\end{itemize}
\end{proof}

\begin{corollary}
	Let $T$ be an $\aleph_0$-categorical first-order $L$-theory which has quantifier elimination and no finite models. Then between any two potentialist systems which consist of models of $T$ there is a bitotal bisimulation.
\end{corollary}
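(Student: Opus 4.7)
The plan is to reduce each of $\mc M_1$ and $\mc M_2$ to a canonical singleton system via \cref{thm:count-cat qe}, link the two singletons using $\aleph_0$-categoricity, and concatenate the resulting bisimulations.

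First I would verify the hypotheses of \cref{thm:count-cat qe} for each input system. Since $T$ is $\aleph_0$-categorical and has no finite models, the Łoś-Vaught test ensures that $T$ is complete. Hence every non-empty potentialist system $\mc M_i$ consisting of models of $T$ satisfies $\Th(\mc M_i) = T$, and moreover every world in $\mc M_i$ is infinite. The theorem then supplies, for each $i$, a bitotal bisimulation $\mc M_i \bis \{Q_i\}$, where $\{Q_i\}$ is a singleton system consisting of a countable model of $T$ equipped only with the identity embedding. By $\aleph_0$-categoricity there is an isomorphism $\xi \colon Q_1 \to Q_2$, which furnishes a bitotal iso-bisimulation $\{Q_1\} \bis \{Q_2\}$; this becomes an ordinary bitotal bisimulation via the iso-to-bisimulation conversion lemma from \cref{sec:bisimulations}.

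The remaining ingredient is that bisimulations compose: given $\sim_1$ between $\mc M_1$ and $\mc N$ and $\sim_2$ between $\mc N$ and $\mc M_2$, the relation
\begin{equation*}
	(M, \ol a) \sim (M', \ol c) \quad\Lra\quad \exists (N, \ol b) \colon (M, \ol a) \sim_1 (N, \ol b) \wedge (N, \ol b) \sim_2 (M', \ol c)
\end{equation*}
is a bisimulation, and bitotality is inherited from the factors. Each of the three bisimulation clauses transfers through the intermediate pair directly, so this step is routine rather than the main obstacle; indeed, the substantive work has already been carried out in \cref{thm:count-cat qe}, and there is no genuine difficulty beyond bookkeeping. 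Concatenating the three bitotal bisimulations $\mc M_1 \bis \{Q_1\} \bis \{Q_2\} \bis \mc M_2$ then yields the desired bitotal bisimulation $\mc M_1 \bis \mc M_2$.
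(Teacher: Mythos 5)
Your proposal is correct and is essentially the argument the paper intends: the corollary is stated without proof precisely because it follows by applying \cref{thm:count-cat qe} to each system (both theories equal $T$ by completeness via Łoś--Vaught), identifying the two countable singleton targets by $\aleph_0$-categoricity, and composing the resulting bitotal bisimulations. Your explicit verification that composition preserves the bisimulation clauses and bitotality is sound and fills in the only detail the paper leaves implicit.
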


A partial converse can be obtained to \cref{thm:count-cat qe}. In order to make it work, we need to make sure that $\mc M$ has `enough structure' from $\Mod(T)$.

\begin{theorem}\label{thm:iso to pot sys one inf model}
	Let $\mc M$ be a potentialist system and let $T = \Th(\mc M)$. Assume that $\mc M$ has a isomorphic copy of every countable model of $T$, together with all embeddings between these isomorphic copies. Assume further that $\mc M$ is bitotally bisimilar to a system $\mc N$ which contains exactly one infinite world which has no self-embeddings except the identity. Then $T$ is $\aleph_0$-categorical and has quantifier elimination.
\end{theorem}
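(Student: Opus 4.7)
The plan is to leverage two key features throughout: the bitotality of a witnessing bisimulation $\sim \colon \mc M \bis \mc N$, and the rigidity of the unique infinite world $Q \in \mc N$, whose only self-embedding is the identity. Two preliminary observations would drive everything. First, by bitotality and \cref{thm:bisimulation invariance}, $T = \Th(\mc M) = \Th(\mc N)$. Second, every infinite world of $\mc M$ must be bisimilar to $Q$: if one had $(M, \emptyset) \sim (N, \emptyset)$ with $M$ infinite and $N$ finite, then iteratively extending along distinct elements of $M$ via \ref{item:isomorphism extension; def:bisimulation} would eventually force a collision in $N$, violating \ref{item:partial isomorphism; def:bisimulation}. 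So $Q$ is the only partner available in $\mc N$ for any infinite world of $\mc M$.

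For $\aleph_0$-categoricity, I would take two countable infinite models $M_1, M_2 \vDash T$; by hypothesis their iso copies lie in $\mc M$, and both are bisimilar to $Q$. The relation $R$ on equal-length tuples defined by $\ol a_1 \mathbin{R} \ol a_2$ iff there exists $\ol c \in Q$ with $(M_1, \ol a_1) \sim (Q, \ol c)$ and $(M_2, \ol a_2) \sim (Q, \ol c)$ is a classical back-and-forth system (applying \ref{item:isomorphism extension; def:bisimulation} on each side in turn), and thus yields an isomorphism $M_1 \cong M_2$. Vaught's test then gives completeness of $T$, and since $T$ has an infinite model it has no finite ones, so $T$ is $\aleph_0$-categorical.

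For quantifier elimination the crucial step is that every embedding $\pi \colon M \to M'$ in $\mc M$ with $M$ infinite is elementary. Both $M$ and $M'$ are bisimilar to $Q$. For any $\ol a \in M$ with $(M, \ol a) \sim (Q, \ol c)$, applying \ref{item:back and forth; def:bisimulation} to $\pi$ produces $\rho \colon Q \to Q''$ in $\mc N$ such that $(M', \pi(\ol a)) \sim (Q'', \rho(\ol c))$. But $Q''$ receives an embedding from the infinite $Q$, hence is infinite, hence equal to $Q$; then $\rho$ is a self-embedding of $Q$, which by hypothesis forces $\rho = \mathrm{id}$. Thus $(M, \ol a)$ and $(M', \pi(\ol a))$ are both bisimilar to $(Q, \ol c)$, so \cref{thm:bisimulation invariance} gives $(\mc M, M, \ol a) \equiv (\mc M, M', \pi(\ol a))$, meaning $\pi$ is elementary. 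Because $\mc M$ contains every embedding between its chosen iso copies of countable models, this shows in turn that every embedding between any two countable models of $T$ is elementary, and a Downward L\"owenheim--Skolem reduction (taking $|L| \leq \aleph_0$, a natural assumption in this $\aleph_0$-categorical context) extends elementarity to every embedding between any two models of $T$, yielding QE by the standard test.

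The main obstacle I anticipate is the single-application use of $Q$'s rigidity: one must verify carefully that, in $\mc N$, every embedding with domain $Q$ lands back in $Q$ (since $Q$ is the only infinite world of $\mc N$) and is therefore a self-embedding forced to be the identity, so the mirror move in \ref{item:back and forth; def:bisimulation} is always trivial. It is exactly this uniqueness-plus-rigidity combination that makes a single bisimulation step sufficient to force elementarity, and without it the argument would collapse.
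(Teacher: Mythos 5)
Your treatment of $\aleph_0$-categoricity is correct and is essentially the paper's own argument (a back-and-forth between two countable models routed through $Q$), and your observation that the uniqueness-plus-rigidity of $Q$ forces every embedding in $\mc M$ out of an infinite world to be elementary is also correct. The gap is in the final step of the quantifier-elimination part: what you have actually proved there is that $T$ is \emph{model complete}, and model completeness is strictly weaker than quantifier elimination, even in the presence of $\aleph_0$-categoricity; there is no ``standard test'' deriving QE from ``every embedding between models of $T$ is elementary''. For instance, the theory of the random bipartite graph in the language $\{E\}$ is $\aleph_0$-categorical and model complete (both ``same side'' and ``opposite sides'' are existentially definable, so embeddings between models preserve the bipartition and are elementary), yet it fails QE: a non-adjacent pair on the same side and a non-adjacent pair on opposite sides have the same quantifier-free type but different complete types. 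The correct criterion is substructure completeness --- any two tuples in models of $T$ with the same quantifier-free type must have the same complete type --- and nothing in your argument connects ``same atomic type'' to ``$\sim$-related to the same tuple of $Q$''.

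The paper supplies exactly this missing link. Given an atomic type $r$ and two complete types $p,q \supseteq r$, it fixes a single countable $W \in \mc M$ with $\ol w$ realising $r$ and a partner $(Q,\ol q)$ of $(W,\ol w)$, and then uses a compactness/diagram argument on $T \cup \Diag(W) \cup p$ (and likewise for $q$) to produce embeddings $\pi \colon W \to X$ and $\rho \colon W \to Y$ \emph{of the same model, moving the same tuple}, with $\pi(\ol w)$ realising $p$ and $\rho(\ol w)$ realising $q$; these embeddings lie in $\mc M$ by the hypothesis that $\mc M$ contains all embeddings between copies of countable models. Condition \ref{item:back and forth; def:bisimulation} together with the rigidity of $Q$ (your own key observation) then forces $(X,\pi(\ol w)) \sim (Q,\ol q) \sim (Y,\rho(\ol w))$, so $p = q$ by \cref{thm:bisimulation invariance}. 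In short, rigidity must be applied to two embeddings of a common realisation of $r$, not to a single embedding. Two minor further points: your aside ``since $T$ has an infinite model it has no finite ones'' is not a valid inference (and the theorem does not assume $T$ lacks finite models --- $\aleph_0$-categoricity here only concerns the countably infinite models, which your back-and-forth already handles); and completeness of $T$, which you route through Vaught's test, is what the paper needs for the compactness step above, not for categoricity itself.
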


Note that we don't need the infinite world in $\mc N$ to be countable.

\begin{proof}
	Let $Q$ be the unique infinite world in $\mc N$, and let $\sim$ be the bisimulation between $\mc M$ and $\mc N$. Note that if $M$ in $\mc M$ is infinite, then every pair $(M, \ol a)$ is related to a pair with $Q$.

	First, let us see that $T$ is $\aleph_0$-categorical. Take $X, Y$ any countable models of $T$. We can assume without loss of generality that $X$ and $Y$ are in $\mc M$. We will build up an isomorphism $X \cong Y$ using a back-and-forth argument. First, using that $\sim$ is bitotal, for any $x_0 \in X$ the pair $(X, (x_0))$ is related to some pair $(Q, (q_0))$, which must in turn be related some pair $(Y, (y_0))$. Going the other direction, for any $y_1 \in Y$, the pair $(Y, (y_0, y_1))$ is related to some $(Q, (q_0, q_1))$, which is related to some $(X, (x_0, x_1))$. Proceeding in this way, using enumerations of $X$ and $Y$, we obtain a bijection $X \to Y$. Using condition \ref{item:partial isomorphism; def:bisimulation} on bisimulations, each finite correspondence is a partial isomorphism. Hence the whole correspondence $X \to Y$ is an isomorphism.

	Second, let us check that $T$ admits quantifier elimination. First note that since $T$ is $\aleph_0$-categorical, it is complete. Let $r$ be any atomic type over $T$, and let $p$ and $q$ be extensions of $r$ to a complete type. The goal is to show that $p=q$. Let $W \in \mc M$ be a countable model with $\ol w$ in $W$ realising $r$. Since $\sim$ is bitotal, there is a pair $(Q,\ol q)$ related to $(W, \ol w)$. By a compactness argument, using that $T$ is complete, there are countable models $X$ and $Y$ of $T$ and embeddings $\pi \colon W \to X$ and $\rho \colon W \to Y$ such that $\pi(\ol w)$ realises $p$ and $\rho(\ol w)$ realises $q$. We can assume that $X, Y, \pi, \rho$ are in $\mc M$. Now, since in $\mc N$ the only embedding from $Q$ is the identity, we have that:
	\begin{equation*}
		(X, \rho(\ol w)) \sim (Q, \ol q) \sim (Y, \rho(\ol w))
	\end{equation*}
	By the Fundamental Theorem of Bisimulations (\cref{thm:bisimulation invariance}), in particular:
	\begin{equation*}
		X, \rho(\ol w) \equiv Y, \rho(\ol w)
	\end{equation*}
	Hence $p=q$ as required.
\end{proof}

Let us see that it is necessary to assume that $\mc M$ has a isomorphic copy of every countable model of $T$. Let $T$ be the theory of non-zero $\Q$-vector spaces, and let $\mc M$ be the subsystem of $\Mod(T)$ consisting of the uncountable worlds. Then $\mc M$ is bitotally bisimilar with the system consisting just of the $\aleph_0$-dimensional $\Q$-vector space with the identity embedding. However $T = \Th(\mc M)$ is not $\aleph_0$-categorical.

I conclude this section with an example of how things can go wrong for \cref{thm:count-cat qe} when we don't require that $T$ admit quantifier elimination. The point is that once we are allowed to play with slightly more intricate first-order models, we can start arranging them to produce a rich modal structure.

Let $L = \{\approx\}$, and let $T$ specify that $\approx$ is an equivalence relation with infinitely many size-$1$ and size-$2$ equivalence classes, and no other classes. Then $T$ is $\aleph_0$-categorical, but does not have quantifier elimination. However every one-variable formula is equivalent to either an existential or a universal formula.

We construct for every ordinal $\alpha$ a pointed system $(\mc M_\alpha, M_\alpha)$ of countable models of $T$. See \cref{fig:pot system M3; ex:omega-cat non-reducible pot sys} for a representation of the system $\mc M_3$. The underlying modal structure of $\mc M_\alpha$ is the result of viewing $\alpha$ as a well-founded tree $X_\alpha$. (That is, $X_\alpha$ consists of a root sitting below disjoint copies of each $X_\beta$ for $\beta < \alpha$.) We place countable models of $T$ along this structure as follows. First let $N$ be any countable model. At the root of $X_\alpha$ we place a model consisting of $N$ plus new elements $u_0, u_1, \ldots$, each in a singleton equivalence class. On the next level up we place disjoint copies of the same model with a new element $v_0$, which appears in an equivalence class with $u_0$. The model at the root embeds into those above it via the inclusion embedding. The construction proceeds upwards in this fashion. Let $M_\alpha$ be the model appearing at the root in this system.

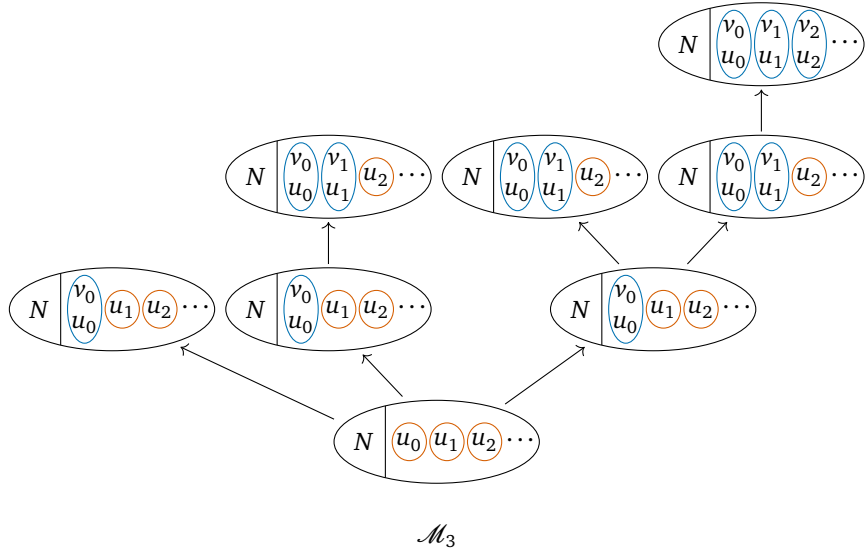
\begin{figure}
	\begin{equation*}
		\begin{tikzpicture}[xscale=0.45,yscale=0.5]
			\begin{scope}
				\draw (0,0) ellipse (3 and 1.1);
				\clip (0,0) ellipse (3 and 1.1);
				\draw (-1.5,-1) -- (-1.5,1);
				\node at (-2.1,0) {$N$};
				\node[black] at (-0.8,0) {$u_0$};
				\draw[pallette_vermillion] (-0.8,0) circle (0.5);
				\node[black] at (0.3,0) {$u_1$};
				\draw[pallette_vermillion] (0.3,0) circle (0.5);
				\node[black] at (1.4,0) {$u_2$};
				\draw[pallette_vermillion] (1.4,0) circle (0.5);
				\node at (2.5,0) {$\cdots$};
				\coordinate (b000) at (0,-1.2);
				\coordinate (t000) at (0,1.2);
			\end{scope}
			\begin{scope}[xshift=-270,yshift=100]
				\draw (0,0) ellipse (3 and 1.1);
				\clip (0,0) ellipse (3 and 1.1);
				\draw (-1.5,-1) -- (-1.5,1);
				\node at (-2.1,0) {$N$};
				\node[black] at (-0.8,-0.4) {$u_0$};
				\node[black] at (-0.8,0.4) {$v_0$};
				\draw[pallette_blue] (-0.8,0) ellipse (0.5 and 0.9);
				\node[black] at (0.3,0) {$u_1$};
				\draw[pallette_vermillion] (0.3,0) circle (0.5);
				\node[black] at (1.4,0) {$u_2$};
				\draw[pallette_vermillion] (1.4,0) circle (0.5);
				\node at (2.5,0) {$\cdots$};
				\coordinate (b001) at (0,-1.2);
				\coordinate (t001) at (0,1.2);
			\end{scope}
			\begin{scope}[xshift=-90,yshift=100]
				\draw (0,0) ellipse (3 and 1.1);
				\clip (0,0) ellipse (3 and 1.1);
				\draw (-1.5,-1) -- (-1.5,1);
				\node at (-2.1,0) {$N$};
				\node[black] at (-0.8,-0.4) {$u_0$};
				\node[black] at (-0.8,0.4) {$v_0$};
				\draw[pallette_blue] (-0.8,0) ellipse (0.5 and 0.9);
				\node[black] at (0.3,0) {$u_1$};
				\draw[pallette_vermillion] (0.3,0) circle (0.5);
				\node[black] at (1.4,0) {$u_2$};
				\draw[pallette_vermillion] (1.4,0) circle (0.5);
				\node at (2.5,0) {$\cdots$};
				\coordinate (b010) at (0,-1.2);
				\coordinate (t010) at (0,1.2);
			\end{scope}
			\begin{scope}[xshift=-90,yshift=200]
				\draw (0,0) ellipse (3 and 1.1);
				\clip (0,0) ellipse (3 and 1.1);
				\draw (-1.5,-1) -- (-1.5,1);
				\node at (-2.1,0) {$N$};
				\node[black] at (-0.8,-0.4) {$u_0$};
				\node[black] at (-0.8,0.4) {$v_0$};
				\draw[pallette_blue] (-0.8,0) ellipse (0.5 and 0.9);
				\node[black] at (0.3,-0.4) {$u_1$};
				\node[black] at (0.3,0.4) {$v_1$};
				\draw[pallette_blue] (0.3,0) ellipse (0.5 and 0.9);
				\node[black] at (1.4,0) {$u_2$};
				\draw[pallette_vermillion] (1.4,0) circle (0.5);
				\node at (2.5,0) {$\cdots$};
				\coordinate (b011) at (0,-1.2);
				\coordinate (t011) at (0,1.2);
			\end{scope}
			\begin{scope}[xshift=180,yshift=100]
				\draw (0,0) ellipse (3 and 1.1);
				\clip (0,0) ellipse (3 and 1.1);
				\draw (-1.5,-1) -- (-1.5,1);
				\node at (-2.1,0) {$N$};
				\node[black] at (-0.8,-0.4) {$u_0$};
				\node[black] at (-0.8,0.4) {$v_0$};
				\draw[pallette_blue] (-0.8,0) ellipse (0.5 and 0.9);
				\node[black] at (0.3,0) {$u_1$};
				\draw[pallette_vermillion] (0.3,0) circle (0.5);
				\node[black] at (1.4,0) {$u_2$};
				\draw[pallette_vermillion] (1.4,0) circle (0.5);
				\node at (2.5,0) {$\cdots$};
				\coordinate (b100) at (0,-1.2);
				\coordinate (t100) at (0,1.2);
			\end{scope}
			\begin{scope}[xshift=90,yshift=200]
				\draw (0,0) ellipse (3 and 1.1);
				\clip (0,0) ellipse (3 and 1.1);
				\draw (-1.5,-1) -- (-1.5,1);
				\node at (-2.1,0) {$N$};
				\node[black] at (-0.8,-0.4) {$u_0$};
				\node[black] at (-0.8,0.4) {$v_0$};
				\draw[pallette_blue] (-0.8,0) ellipse (0.5 and 0.9);
				\node[black] at (0.3,-0.4) {$u_1$};
				\node[black] at (0.3,0.4) {$v_1$};
				\draw[pallette_blue] (0.3,0) ellipse (0.5 and 0.9);
				\node[black] at (1.4,0) {$u_2$};
				\draw[pallette_vermillion] (1.4,0) circle (0.5);
				\node at (2.5,0) {$\cdots$};
				\coordinate (b101) at (0,-1.2);
				\coordinate (t101) at (0,1.2);
			\end{scope}
			\begin{scope}[xshift=270,yshift=200]
				\draw (0,0) ellipse (3 and 1.1);
				\clip (0,0) ellipse (3 and 1.1);
				\draw (-1.5,-1) -- (-1.5,1);
				\node at (-2.1,0) {$N$};
				\node[black] at (-0.8,-0.4) {$u_0$};
				\node[black] at (-0.8,0.4) {$v_0$};
				\draw[pallette_blue] (-0.8,0) ellipse (0.5 and 0.9);
				\node[black] at (0.3,-0.4) {$u_1$};
				\node[black] at (0.3,0.4) {$v_1$};
				\draw[pallette_blue] (0.3,0) ellipse (0.5 and 0.9);
				\node[black] at (1.4,0) {$u_2$};
				\draw[pallette_vermillion] (1.4,0) circle (0.5);
				\node at (2.5,0) {$\cdots$};
				\coordinate (b110) at (0,-1.2);
				\coordinate (t110) at (0,1.2);
			\end{scope}
			\begin{scope}[xshift=270,yshift=300]
				\draw (0,0) ellipse (3 and 1.1);
				\clip (0,0) ellipse (3 and 1.1);
				\draw (-1.5,-1) -- (-1.5,1);
				\node at (-2.1,0) {$N$};
				\node[black] at (-0.8,-0.4) {$u_0$};
				\node[black] at (-0.8,0.4) {$v_0$};
				\draw[pallette_blue] (-0.8,0) ellipse (0.5 and 0.9);
				\node[black] at (0.3,-0.4) {$u_1$};
				\node[black] at (0.3,0.4) {$v_1$};
				\draw[pallette_blue] (0.3,0) ellipse (0.5 and 0.9);
				\node[black] at (1.4,-0.4) {$u_2$};
				\node[black] at (1.4,0.4) {$v_2$};
				\draw[pallette_blue] (1.4,0) ellipse (0.5 and 0.9);
				\node at (2.5,0) {$\cdots$};
				\coordinate (b111) at (0,-1.2);
				\coordinate (t111) at (0,1.2);
			\end{scope}
			\draw[->] ($(t000) + (-3,-0.6)$) -- ($(b001) + (2,0.2)$);
			\draw[->] ($(t000) + (-1,0)$) -- ($(b010) + (1,0)$);
			\draw[->] ($(t000) + (2,-0.2)$) -- ($(b100) + (-2,0.2)$);
			\draw[->] (t010) -- (b011);
			\draw[->] ($(t100) + (-1,0)$) -- ($(b101) + (1,0)$);
			\draw[->] ($(t100) + (1,0)$) -- ($(b110) + (-1,0)$);
			\draw[->] (t110) -- (b111);
			\node at (0,-2.5) {$\mc M_3$};
		\end{tikzpicture}
	\end{equation*}
	\caption{The potentialist system $\mc M_3$}
	\label{fig:pot system M3; ex:omega-cat non-reducible pot sys}
\end{figure}

\begin{theorem}
	Each $(\mc M_\alpha, M_\alpha)$ satisfies a different $L^\dia_\infty$-theory. Hence the system consisting of the disjoint union of each $\mc M_\alpha$ cannot be bitotally bisimilar with a set-sized system.
\end{theorem}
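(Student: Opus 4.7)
The plan is to show that each $(\mc M_\alpha, M_\alpha)$ can be detected by a canonical sentence $\theta_\alpha \in L^\dia_\infty$, and then derive the second statement by a pigeonhole-plus-composition argument. Writing $\sigma(x) \defeq \neg\exists y\,(x \approx y \wedge x \neq y)$ for the formula saying that $x$ lies in a singleton class, I would recursively define sentences
\begin{equation*}
    \theta_0 \defeq \top, \quad \theta_{\alpha+1} \defeq \exists x\,(\sigma(x) \wedge \dia(\neg\sigma(x) \wedge \theta_\alpha)), \quad \theta_\lambda \defeq \bigwedge_{\alpha < \lambda} \theta_\alpha \text{ for limit } \lambda,
\end{equation*}
and prove by induction on $\alpha$ the stronger claim that for every world $W$ occurring in some $\mc M_\gamma$, we have $W \vD \theta_\alpha$ if and only if the subtree of $X_\gamma$ rooted at $W$ has (well-founded) rank at least $\alpha$. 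Specialising to $W = M_\gamma$, whose subtree is $X_\gamma$ of rank $\gamma$, this gives $\mc M_\gamma, M_\gamma \vD \theta_\alpha$ iff $\gamma \geq \alpha$; taking $\alpha = \beta$ then distinguishes $(\mc M_\alpha, M_\alpha)$ from $(\mc M_\beta, M_\beta)$ whenever $\alpha < \beta$.

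The crux of the verification is the successor step. The key combinatorial input is that at a world $W$ of depth $d$ the paired-up singletons are exactly $u_0, \ldots, u_{d-1}$, while the only non-identity embeddings out of $W$ in $\mc M_\gamma$ are the inclusions into strict descendants of $W$, and moving to any proper descendant pairs up $u_d$ (and possibly further singletons). Hence a witness $x$ for the existential in $\theta_{\alpha+1}$ must be some $u_i$ with $i \geq d$, the modal move must be to a proper descendant $W'$ of $W$, and the condition on $W'$ reduces by induction to $r(W') \geq \alpha$. Well-foundedness of $X_\gamma$ then gives the desired equivalence with $r(W) \geq \alpha+1$, while the limit step is immediate.

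For the second statement, suppose for contradiction that the disjoint-union system $\mc A \defeq \bigsqcup_\alpha \mc M_\alpha$ admits a bitotal bisimulation $\sim$ to some set-sized system $\mc N$. For each ordinal $\alpha$, bitotality yields a nonempty set $F(\alpha) \defeq \{N \in \mc N : (M_\alpha, \es) \sim (N, \es)\}$. Since $F$ is a class function $\On \to \mc P(\mc N)$ with set-sized codomain, it cannot be injective, so there exist $\alpha \neq \beta$ with $F(\alpha) \cap F(\beta) \neq \es$, witnessed by some $N^* \in \mc N$. Because $\mc A$ is a disjoint union, every embedding out of a world in $\mc M_\alpha$ stays within $\mc M_\alpha$, so the restrictions of $\sim$ to $\mc M_\alpha \times \mc N$ and $\mc M_\beta \times \mc N$ are themselves bisimulations; composing the first with the inverse of the second through the pair $(N^*, \es)$ produces a bisimulation $(\mc M_\alpha, M_\alpha) \bis (\mc M_\beta, M_\beta)$, contradicting the first statement via the Fundamental Theorem of Bisimulations (\cref{thm:bisimulation invariance}).

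The main obstacle is the inductive verification that $\theta_\alpha$ captures subtree-rank at least $\alpha$, since it requires careful bookkeeping of how singletons become paired along descending paths and of how the embedding structure of $\mc M_\gamma$ interacts with the modal quantifier $\dia$. Everything else, including composition of bisimulations and the pigeonhole on $F$, is routine.
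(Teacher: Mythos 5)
Your proof is correct and follows essentially the same route as the paper: encode the well-founded rank of $X_\gamma$ in $L^\dia_\infty$-sentences built from the button structure, and then rule out a set-sized bisimilar system by pigeonhole. The only real differences are that your $\theta_\alpha$ expresses ``rank at least $\alpha$'' with a single existential diamond (which suffices, since the monotone family still separates the theories), whereas the paper's $\theta_\alpha$ adds a universal box clause so that $\mc M_\beta, M_\beta \vD \theta_\alpha$ holds iff $\beta = \alpha$ exactly, and that your restriction-and-composition treatment of the disjoint union is a more explicit version of the paper's one-line appeal to the Fundamental Theorem of Bisimulations.
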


\begin{proof}
	The idea is that each pair $(u_m,v_m)$ acts as a `button'. The lowest model has all $\aleph_0$-many buttons unpushed. Each time we move up a level, we push an extra button (i.e. add the element $v_m$ to make a $2$-element equivalence class). Of course, each model is isomorphic, so the first-order structure cannot tell us how many buttons have been pushed, or how far up the tree we are. However, using the modal language, we can indeed talk about pushing a button, to get to a higher level.

	Let:
	\begin{gather*}
		p(x) \defeq \neg \exists y (x \neq y \wedge x \approx y) \\
		q(x) \defeq \exists y (x \neq y \wedge x \approx y)
	\end{gather*}
	Define for each $\alpha$ the sentence $\theta_\alpha$ by induction:
	\begin{equation*}
		\theta_\alpha \defeq \exists x \left(p(x) \wedge \bigwedge_{\beta < \alpha} \dia (q(x) \wedge \theta_\beta)\right) \wedge \forall x \left(p(x) \ra \bx \left(q(x) \ra \bigvee_{\beta < \alpha}\theta_\beta\right)\right)
	\end{equation*}
	In words: ``there is a button which is pushed in future models satisfying $\theta_\beta$ for all $\beta < \alpha$, and whenever a button gets pushed in the future, that model satisfies $\theta_\beta$ for some $\beta < \alpha$''. By induction we see that:
	\begin{equation*}
		\mc M_\beta, M_\beta \vD \theta_\alpha \quad \Lra \quad \beta=\alpha
	\end{equation*}
	Therefore, each $(\mc M_\alpha, M_\alpha)$ satisfies a different $L^\dia_\infty$-theory.

	Now, let $\mc M$ be the disjoint union of every $\mc M_\alpha$. If this were bitotally bisimilar with a set-sized system $\mc N$, then class-many of the $M_\alpha$'s would be related to the same world in $N$. But this is impossible by the Fundamental Theorem of Bisimulations (\cref{thm:bisimulation invariance}).
\end{proof}

\section{Open questions}
\label{sec:conclusion}

The first part of this article presented a number of foundational results concerning bisimulations of potentialist systems. The second part considered \cref{q:pot sys bis with set}: when is a potentialist system $\mc M$ bisimilar with a set-sized one? In \cref{sec:mod T set}, I showed that if $T$ is $\kappa$-rich for some $\kappa$ then $\Mod(T)$ is bisimilar with a set-sized system. Does the converse hold?  An iteration argument shows that if $\Mod(T)$ is bisimilar with a set-sized system, is it necessarily bisimilar with some $\Mod_{\lam}(T)$. Will it further be bisimilar with some $\Mod_{\kappa^+}(T)$?

\Cref{sec:omega categorical} seeks to give answers to \cref{q:pot sys bis with set} based solely on $\Th(\mc M)$. This is a rather crude measure, and takes no account of the modal structure of $\mc M$. \Cref{thm:count-cat qe} shows that if $\Th(\mc M)$ is $\aleph_0$-categorical and admits quantifier elimination, then $\mc M$ is bisimilar with a set-sized system. Is this the best result possible? If instead of $\Th(\mc M)$ we consider the set of (finitary) first-order modal sentences satisfied by $\mc M$, can we obtain a more general characterisation?

\section{Acknowledgements}

I would like to thank my supervisor, Joel David Hamkins, for helpful guidance and proof-reading throughout the process of writing this article. This work was supported by the EPSRC [studentship with project reference \emph{2271793}].

	\printbibliography

\end{document}